\newtheorem{theorem}{Theorem}[section]
\newtheorem{lemma}[theorem]{Lemma}
\theoremstyle{definition}
\newtheorem{example}[theorem]{Example}
\newtheorem{corollary}[theorem]{Corollary}
\theoremstyle{remark}
\numberwithin{equation}{section}
\DeclareMathOperator*{\glb}{glb}
\DeclareMathOperator*{\spec}{spec}
\DeclareMathOperator*{\chop}{chop}
\DeclareMathOperator*{\rem}{rem}
\begin{document}

\title{Zero sets of univariate polynomials}


\author{Robert S. Lubarsky}
\address{Florida Atlantic University}
\curraddr{}
\email{Robert.Lubarsky@comcast.net}
\thanks{}

\author{Fred Richman}
\address{Florida Atlantic University}
\curraddr{}
\email{richman@fau.edu}
\thanks{}

\subjclass[2000]{Primary 03F65, 13A99}

\date{}

\dedicatory{}

\begin{abstract}
Let $L$ be the zero set of a nonconstant monic polynomial with complex
coefficients. In the context of constructive mathematics without countable
choice, it may not be possible to construct an element of $L$. In this paper
we introduce a notion of distance from a point to a subset, more general
than the usual one, that allows us to measure distances to subsets like $L$.
To verify the correctness of this notion, we show that the zero set of a
polynomial cannot be empty---a weak fundamental theorem of algebra. We also
show that the zero sets of two polynomials are a positive distance from each
other if and only if the polynomials are comaximal. Finally, the zero set of
a polynomial is used to construct a separable Riesz space, in which every
element is normable, that has no Riesz homomorphism into the real numbers.
\end{abstract}

\maketitle

\section{Quasidistance}

Let $T$ be a set of real numbers. A \textbf{lower bound} for $T$ is a real
number $b$ such that $b\leq t$ for all $t$ in $T$. By the \textbf{infimum}
of $T$ we mean, as usual, a lower bound $\inf T$ for $T$ such that if $\inf
T<r$, then there exists $t\in T$ such that $t<r$. The number $\sup T$ is
defined dually. By the \textbf{greatest lower bound} of $T$ we mean the
largest element, $\glb T$, of the set $B$ of lower bounds of $T$. As
the distinction between $\inf T$ and $\glb T$ in constructive
mathematics is central to this section, we pause to illustrate this
difference.

\begin{example}
\label{Example 1}Let $P$ be an arbitrary proposition and let 
\begin{equation*}
L=\left\{ 0:P\text{ or }\lnot P\right\} \cup \left\{ 1\right\} .
\end{equation*}%
Then the set $B$ of lower bounds of $L$ is $\left( -\infty ,0\right] $. Thus 
$\glb L=0$, but if $\inf L$ exists, then $P$ or $\lnot P$.
\end{example}

\begin{proof}
Elements of $\left( -\infty ,0\right] $ are certainly lower bounds of $L$.
Conversely, if $b>0$, then $b$ cannot be a lower bound of $L$, because if $0$
were not in $L$, then $P$ would have to be false, so $\lnot P$ would be
true, so $0$ would be in $L$. So elements of $B$ cannot be (strictly)
positive, whence they are in $\left( -\infty ,0\right] $.

Finally, if $m=\inf L$, then $m\leq 0$ so $1>m$ whence there exists $r<1$ in 
$L$. From this $P$ or $\lnot P$ follows. \medskip
\end{proof}

So the notions of infimum and greatest lower bound are the same only in the
presence of the law of excluded middle. The relations that hold between the
two notions are stated in the following theorem.

\begin{theorem}
\label{Theorem glb}Let $T$ be a set of real numbers that is bounded below,
and $B$ the set of lower bounds for $T$. If $\sup B$ exists, then $\glb T=\sup B$.
If $\inf T$ exists, then $\glb T=\inf T$.
\end{theorem}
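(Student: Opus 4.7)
The plan is to verify each equality directly from the definition of $\glb T$ as the largest element of $B$. In each case I must produce a candidate in $B$ and show it dominates every element of $B$. The arguments are constructive because the relation $x \leq y$ on the reals is defined as $\neg(y < x)$, so it can be established by deriving a contradiction from $y < x$.

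For the first part, assume $\sup B$ exists. I would first show that $\sup B$ is itself a lower bound of $T$, i.e., $\sup B \in B$. Pick any $t \in T$; if $t < \sup B$, then by the defining property of $\sup$ there exists $b \in B$ with $t < b$, contradicting the fact that $b$, being a lower bound of $T$, satisfies $b \leq t$. Hence $\sup B \leq t$ for every $t \in T$, so $\sup B \in B$. Since $\sup B$ is also an upper bound of $B$, it is the largest element of $B$, which is to say $\glb T = \sup B$.

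For the second part, assume $\inf T$ exists. By definition $\inf T$ is a lower bound of $T$, so $\inf T \in B$. To see it is the largest such, let $b \in B$; if $\inf T < b$ then by the defining property of $\inf$ there exists $t \in T$ with $t < b$, contradicting $b \leq t$. Hence $b \leq \inf T$ for every $b \in B$, so $\inf T = \glb T$.

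The proof is largely formal; the only subtlety is avoiding any appeal to excluded middle. This is handled by the observation above that $\leq$ on $\mathbb{R}$ is the negation of strict inequality, so the two arguments by contradiction are legitimate. No existence claim about $\glb T$, $\sup B$, or $\inf T$ beyond what is hypothesized is required, which sidesteps the phenomenon illustrated in Example~\ref{Example 1}.
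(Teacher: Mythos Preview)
Your proof is correct and follows essentially the same argument as the paper's: in each case you place the candidate ($\sup B$ or $\inf T$) in $B$ and then show it dominates every element of $B$, using the defining property of $\sup$ (resp.\ $\inf$) to derive a contradiction from a strict inequality. The paper's proof is terser but identical in structure, and your added remark that $\leq$ is the negation of $<$ makes explicit why the contradiction arguments are constructively valid.
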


\begin{proof}
Suppose $\sup B$ exists. We must show that it is in $B$, that is, that $%
t\geq \sup B$ for all $t$ in $T$. But $t\geq b$ for each $b$ in $B$, so $%
t\geq \sup B$. (If $t<\sup B$, then there would be $b\in B$ such that $t<b$.)

Suppose $\inf T$ exists. We must show that it is the maximum element of $B$.
It is in $B$ by definition. If $b\in B$ and $b>\inf T$, then there exists $t$
in $T$ such that $t<b$, a contradiction, so $b\leq \inf T$. \medskip
\end{proof}

The usual informal description of a located subset $L$ of a metric space $M$
is that we can compute the distance from any point in $M$ to $L$. This is
slightly misleading because the rules for computing the distance are not
explicitly stated. It is not enough to be able to compute the distance from $%
x$ to $L$, we must compute the infimum%
\begin{equation*}
\inf_{y\in L}d\left( x,y\right)
\end{equation*}%
Thus there are some implied constructions of elements of $L$ hidden in that
statement about computing distances. That is not the case for the greatest
lower bound%
\begin{equation*}
\glb_{y\in L}d\left( x,y\right)
\end{equation*}%
which is also a candidate for the distance from $x$ to $L$.

To illustrate this distinction, let $M$ be the real line and $L$ the subset
of Example \ref{Example 1}. The subset $L$ contains an element, is closed,
and $\glb_{y\in L}d\left( -1,y\right) =1+\glb L=1$.
However, $\inf_{y\in L}d\left( -1,y\right) =1+\inf L$, but $L$ does not have
an infimum.

If $L$ is a subset of a metric space $M$, and $x$ is a point in $M$, then
the \textbf{distance} from $x$ to $L$ is%
\begin{equation*}
d\left( x,L\right) =\inf_{y\in L}d\left( x,y\right)
\end{equation*}%
We define the \textbf{quasidistance} from $x$ to $L$ to be 
\begin{equation*}
\delta \left( x,L\right) =\glb_{y\in L}d\left( x,y\right)
\end{equation*}%
We say that $L$ is \textbf{located} if $d\left( x,L\right) $ exists for each 
$x$ in $M$, and that $L$ is \textbf{quasilocated} if $\delta \left(
x,L\right) $ exists for each $x$ in $M$. It follows from Theorem \ref%
{Theorem glb} that a located set $L$ is quasilocated with $\delta \left(
x,L\right) =d\left( x,L\right) $ for all $x$. Note that a quasilocated set
cannot be empty.

\begin{theorem}
If $L$ is a quasilocated subset of a metric space $M$, then $\delta \left(
x,L\right) \leq \delta \left( y,L\right) +d\left( x,y\right) $ for all $x$
and $y$ in $M$. It follows that $\left\vert \delta \left( x,L\right) -\delta
\left( y,L\right) \right\vert \leq d\left( x,y\right) $ so $\delta \left(
x,L\right) $ is a uniformly continuous function of $x$.
\end{theorem}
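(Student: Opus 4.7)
My plan is to work directly with lower bounds rather than try to ``take the infimum'' over $z\in L$, since the whole point of the $\glb$ formalism is that infima may fail to exist constructively even when $\glb$'s do. So the first move is to recast the inequality $\delta(x,L)\leq \delta(y,L)+d(x,y)$ into a statement purely about lower bounds, using the characterization that $\glb T \leq c$ whenever every lower bound of $T$ is $\leq c$ (which is just the maximality of the $\glb$ among lower bounds).

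Concretely, I would let $b$ be an arbitrary lower bound of $\{d(x,z):z\in L\}$ and show that $b-d(x,y)$ is then a lower bound of $\{d(y,z):z\in L\}$. This is the only place the triangle inequality enters: for any $z\in L$,
\begin{equation*}
b\leq d(x,z)\leq d(x,y)+d(y,z),
\end{equation*}
so $b-d(x,y)\leq d(y,z)$. Since $\delta(y,L)$ is the greatest lower bound of $\{d(y,z):z\in L\}$, this gives $b-d(x,y)\leq \delta(y,L)$, i.e.\ $b\leq \delta(y,L)+d(x,y)$. As this holds for every lower bound $b$ of $\{d(x,z):z\in L\}$, and $\delta(x,L)$ is itself such a lower bound, the number $\delta(y,L)+d(x,y)$ is an upper bound of the set of lower bounds, forcing $\delta(x,L)\leq \delta(y,L)+d(x,y)$.

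The remaining assertion is immediate: swapping the roles of $x$ and $y$ gives $\delta(y,L)\leq \delta(x,L)+d(x,y)$, hence $|\delta(x,L)-\delta(y,L)|\leq d(x,y)$, which is the $1$-Lipschitz condition and in particular yields uniform continuity.

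The only real subtlety, and what I would flag as the main conceptual obstacle, is resisting the temptation to argue ``pick $z\in L$ nearly realizing $\delta(y,L)$'': since $L$ need not be inhabited in any usable way and $\inf$'s need not exist, we cannot produce such a witness. Routing the argument through an arbitrary lower bound $b$ sidesteps this and uses only the defining property of $\glb$ as the maximum lower bound, which is exactly what quasilocatedness guarantees.
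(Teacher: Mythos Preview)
Your proof is correct and follows essentially the same route as the paper's: both show via the triangle inequality that a certain quantity is a lower bound for $\{d(y,t):t\in L\}$ and then invoke the maximality of $\delta(y,L)$. The paper is slightly more direct in that it takes $b=\delta(x,L)$ from the outset---since $\delta(x,L)$ is itself a lower bound for $\{d(x,t):t\in L\}$, one can show $\delta(x,L)-d(x,y)\leq d(y,t)$ for all $t\in L$ immediately, without the detour through an arbitrary lower bound $b$.
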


\begin{proof}
It suffices to show that $\delta \left( x,L\right) -d\left( x,y\right) \leq
d\left( y,t\right) $ for all $t\in L$. But $d\left( x,t\right) \leq d\left(
x,y\right) +d\left( y,t\right) $, and $\delta \left( x,L\right) \leq d\left(
x,t\right) $. \medskip
\end{proof}

\noindent If we think of $\delta \left( x,L\right) $ as a lowercut (a
supremum of a bounded set of real numbers), see \cite{GR}, then the
inequality $\delta \left( x,L\right) \leq \delta \left( y,L\right) +d\left(
x,y\right) $ holds for an arbitrary subset $L$ of $M$.

The quasidistance can be thought of as dual to the distance. That's because
we can express the distance from $x$ to $L$ as%
\begin{equation*}
d\left( x,L\right) =\inf \left\{ d\left( x,t\right) :t\in L\right\} =\inf
\left\{ r:d\left( x,t\right) \leq r\text{ for some }t\text{ in }L\right\}
\end{equation*}%
while the quasidistance is%
\begin{equation*}
\delta \left( x,L\right) =\sup \left\{ r:d\left( x,t\right) \geq r\text{ for
all }t\text{ in }L\right\}
\end{equation*}

Throughout, we will use $B_{r}\left( x\right) $ to denote the open ball of
radius $r$ centered at $x$.

\begin{lemma}
\label{Lemma anchor}Let $L$ be a subset of a metric space $M$ and $x\in M$.
Then $r$ is a lower bound for $\left\{ d\left( x,y\right) :y\in L\right\} $
if and only if $B_{r}\left( x\right) \cap L=\emptyset $.
\end{lemma}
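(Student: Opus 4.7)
The plan is to prove both directions directly by unwinding the definition of the open ball $B_r(x) = \{z \in M : d(x,z) < r\}$ and the definition of a lower bound.

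For the forward direction, I would assume $r$ is a lower bound, so $d(x,y) \geq r$ for every $y \in L$. If some $y$ lay in $B_r(x) \cap L$, we would simultaneously have $d(x,y) < r$ and $d(x,y) \geq r$, which is impossible; hence the intersection is empty.

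For the converse, I would assume $B_r(x) \cap L = \emptyset$ and take an arbitrary $y \in L$. Then $y \notin B_r(x)$, that is, $\neg(d(x,y) < r)$, which in the real numbers is exactly $d(x,y) \geq r$; since $y \in L$ was arbitrary, $r$ is a lower bound. The main (and only mild) point to be careful about is the equivalence between $\neg(d(x,y) < r)$ and $d(x,y) \geq r$ for real distances, which is a standard constructive fact about $\mathbb{R}$, so there is really no obstacle here; the lemma is essentially a reformulation of the defining inequality for $B_r(x)$ and will serve as a convenient bridge between set-theoretic nonoccupation of balls and numerical lower bounds in the subsequent arguments about quasilocatedness.
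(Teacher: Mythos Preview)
Your proof is correct and essentially the same as the paper's: the paper just packages both directions at once by observing that each condition is exactly $\lnot P$ for the statement $P$ that there exists $y\in L$ with $d(x,y)<r$, using (as you do) that $\lnot(d(x,y)<r)$ is equivalent to $d(x,y)\geq r$.
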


\begin{proof}
Consider the statement $P$ that there exists $y\in L$ such that $d\left(
x,y\right) <r$. Then $\lnot P$ says $d\left( x,y\right) \geq r$ for all $%
y\in L$, that is, that $r$ is a lower bound for $\left\{ d\left( x,y\right)
:y\in L\right\} $. On the other hand, $\lnot P$ says that $B_{r}\left(
x\right) \cap L=\emptyset $. \medskip
\end{proof}

\begin{theorem}
If $\delta \left( x,L\right) $ exists, then $\delta \left( x,L\right) \leq r$
if and only if $B_{s}\left( x\right) \cap L$ cannot be empty for any $s>r$.
\end{theorem}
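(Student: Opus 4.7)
The plan is to reduce both implications to Lemma \ref{Lemma anchor}, which gives the key equivalence: $s$ is a lower bound of $\{d(x,y):y\in L\}$ iff $B_{s}(x)\cap L=\emptyset$. Taking the classical negation of both sides, the contrapositive form reads: \emph{$s$ fails to be a lower bound} iff \emph{$B_{s}(x)\cap L$ cannot be empty}. This is exactly the form that matches the statement of the theorem once we translate between ``lower bound'' talk and $\glb$ talk.

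For the forward direction, I would assume $\delta(x,L)\leq r$ and fix an arbitrary $s>r$. Since $\delta(x,L)$ is the \emph{greatest} lower bound, every lower bound $b$ satisfies $b\leq\delta(x,L)\leq r<s$, so $s$ itself is not a lower bound. By Lemma \ref{Lemma anchor}, $B_{s}(x)\cap L$ cannot be empty.

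For the converse, I would argue by showing $\lnot(\delta(x,L)>r)$, which is equivalent to $\delta(x,L)\leq r$ for real numbers. Suppose $\delta(x,L)>r$, and choose $s$ with $r<s<\delta(x,L)$. Since $\delta(x,L)$ is itself a lower bound of $\{d(x,y):y\in L\}$, we have $s<\delta(x,L)\leq d(x,y)$ for every $y\in L$, so $s$ is also a lower bound. Then Lemma \ref{Lemma anchor} yields $B_{s}(x)\cap L=\emptyset$, contradicting the hypothesis that $B_{s}(x)\cap L$ cannot be empty for any $s>r$.

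The only subtle point, and the step I expect to require the most care, is the logical bookkeeping around ``cannot be empty'': this is the double negation $\lnot(B_{s}(x)\cap L=\emptyset)$, which is exactly the negation of the ``$\lnot$-style'' statement ``$s$ is a lower bound'' supplied by Lemma \ref{Lemma anchor}, so no strengthening to the existence of an actual witness in $B_{s}(x)\cap L$ is needed. With that observation, neither direction requires any choice or excluded middle beyond what is already built into the definitions of $\leq$ on $\mathbb{R}$ and of $\glb$.
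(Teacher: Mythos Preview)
Your proof is correct and follows essentially the same approach as the paper: both directions are reduced to Lemma \ref{Lemma anchor}, the forward direction derives a contradiction from $s>r\geq\delta(x,L)$ being a lower bound, and the converse derives a contradiction by picking an $s$ strictly between $r$ and $\delta(x,L)$ (the paper simply takes the explicit midpoint $s=(\delta(x,L)+r)/2$). Your closing remarks on the constructive logical bookkeeping are accurate and make explicit what the paper leaves implicit.
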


\begin{proof}
Suppose $\delta \left( x,L\right) =\glb_{y\in L}d\left( x,y\right)
\leq r$. If $B_{s}\left( x\right) \cap L$ were empty for $s>r$, then $s$
would be a lower bound for $\left\{ d\left( x,y\right) :y\in L\right\} $, a
contradiction. Conversely, suppose $B_{s}\left( x\right) \cap L$ cannot be
empty for any $s>r$. If $\delta \left( x,L\right) >r$, then $B_{s}\left(
x\right) \cap L$ is empty for $s=\left( \delta \left( x,L\right) +r\right)
/2 $, a contradiction. \medskip
\end{proof}

We really aren't particularly interested in Example \ref{Example 1}. What
motivated these ideas was the study, in a choiceless environment, of the set 
$L$ of roots of a monic polynomial $p$ with complex coefficients. If the
coefficients of $p$ are complex numbers over the Cauchy reals, then we can
write $p\left( X\right) =\left( X-r_{1}\right) \cdots \left( X-r_{n}\right) $%
, see \cite{Wim}, and $L$ is the closure of the set $\left\{ r_{1},\ldots
,r_{n}\right\} $, so $L$ is easily seen to be located. However, if the
coefficients are arbitrary complex numbers, then we need not be able to
construct a root of $p$, see \cite{FTA}, even if $p$ is of the form $X^{2}-a$%
, so we can't show that $L$ is located. However, $L$ is always quasilocated,
as we will see (Corollary \ref{Corollary qlrootset}).

In the presence of choice, at least in complete spaces $M$ where balls are
totally bounded (such as Euclidean spaces), quasilocated zero sets are
located. To see that some hypothesis on $M$ is necessary to get this
conclusion, take $L$ as in Example \ref{Example 1}, and $M=\left\{
-1\right\} \cup L$. Then $L$ is the zero set of the function $x\left(
x-1\right) $ on $M$.

\begin{theorem}
Let $M$ be a complete space in which balls are totally bounded. Assuming the
axiom of dependent choices, quasilocated zero sets of uniformly continuous
functions on $M$ are located.
\end{theorem}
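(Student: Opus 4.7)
Fix $x \in M$ and write $d = \delta(x, L)$. To show $d(x, L)$ exists, by Theorem \ref{Theorem glb} it suffices, for each $r > d$, to produce $y \in L$ with $d(x, y) < r$. The plan is to use dependent choices to construct a Cauchy sequence $x_0 = x, x_1, x_2, \ldots$ with $\delta(x_n, L) < \epsilon_n$ and $d(x_n, x_{n+1}) \leq \epsilon_n$, where the $\epsilon_n$ are chosen so that $\epsilon_0 > d$ is close to $d$ and $\sum_n \epsilon_n < r$ (for instance, $\epsilon_0 = d + (r-d)/3$ and $\epsilon_n = (r-d)/(3 \cdot 2^n)$ for $n \geq 1$). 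Completeness then supplies a limit $y^* \in M$ with $d(x, y^*) < r$, and uniform continuity of $f$ will force $y^* \in L$.

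For the inductive step, assume $x_n$ satisfies $\delta(x_n, L) < \epsilon_n$, so by the theorem relating quasidistance to non-emptiness, $B_{\epsilon_n}(x_n) \cap L$ cannot be empty. Since $\overline{B_{\epsilon_n}(x_n)}$ is totally bounded, cover it by finitely many open balls of radius $\epsilon_{n+1}/2$ centered at $w_1, \ldots, w_k$. Quasilocatedness makes each $\delta(w_i, L)$ a bona fide real number, so the standard approximate comparison for reals lets us decide, for each $i$, between the overlapping cases $\delta(w_i, L) < \epsilon_{n+1}$ and $\delta(w_i, L) > \epsilon_{n+1}/2$. In the latter case, $\epsilon_{n+1}/2$ is a lower bound for $\{d(w_i, y) : y \in L\}$, and Lemma \ref{Lemma anchor} yields $B_{\epsilon_{n+1}/2}(w_i) \cap L = \emptyset$. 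If every $w_i$ fell into that case, their union would cover $B_{\epsilon_n}(x_n)$ while each missing $L$, contradicting the non-emptiness of $B_{\epsilon_n}(x_n) \cap L$. A finite search therefore produces some $w_i$ with $\delta(w_i, L) < \epsilon_{n+1}$, and we set $x_{n+1} := w_i \in \overline{B_{\epsilon_n}(x_n)}$.

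It remains to check $y^* \in L$. Continuity of $\delta(\cdot, L)$ gives $\delta(y^*, L) = \lim_n \delta(x_n, L) = 0$, so $B_\mu(y^*) \cap L$ cannot be empty for any $\mu > 0$. If $|f(y^*)| > \eta$ for some $\eta > 0$, uniform continuity of $f$ would furnish $\mu > 0$ with $|f(z)| > \eta/2$ on $B_\mu(y^*)$, hence $B_\mu(y^*) \cap L = \emptyset$, a contradiction. Therefore $|f(y^*)| \leq \eta$ for every $\eta > 0$, so $f(y^*) = 0$ and $y^* \in L$.

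The delicate point is the finite search in the inductive step: we need decidable comparisons that actually terminate, which is exactly what quasilocatedness buys us by making $\delta(w_i, L)$ a genuine real number rather than a double-negation statement about $L$.
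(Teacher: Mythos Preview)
Your proof is correct and follows essentially the same Baire-category-style approach as the paper: use dependent choices to build a Cauchy sequence with $\delta(x_n,L)\to 0$, then invoke completeness and the zero-set hypothesis to land in $L$. The only difference is cosmetic---where the paper appeals to the existence of the infimum of the uniformly continuous function $\delta(\cdot,L)$ on the totally bounded ball to locate the next point, you unpack that infimum into an explicit finite cover and a finite search, which amounts to the same thing.
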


\begin{proof}
Suppose $L$ is a quasilocated subset of $M$ and $x\in M$. We will show that
if $\delta \left( x,L\right) <r$, and $\varepsilon >0$, then there exists $%
t\in L$ such that $d\left( x,t\right) \leq r+\varepsilon $. The proof is
reminiscent of the proof of the Baire category theorem. First we construct
an element $y_{1}\in B_{r}\left( x\right) $ such that $\delta \left(
y_{1},L\right) <\varepsilon /2$. This is possible because the function $%
\delta \left( z,L\right) $ is uniformly continuous, so has an infimum $m$ as 
$z$ ranges over $B_{r}\left( x\right) $. Either $m>0$ or $m<\varepsilon /2$.
But if $m>0$, then $\delta \left( x,L\right) \geq r$, so $m<\varepsilon /2$.

Similarly we construct an element $y_{2}\in B_{\varepsilon /2}\left(
y_{1}\right) $ such that $\delta \left( y_{2},L\right) <\varepsilon /4$.
Then an element $y_{3}\in B_{\varepsilon /4}\left( y_{2}\right) $ such that $%
\delta \left( y_{3},L\right) <\varepsilon /8$. In general, we construct $%
y_{n+1}\in B_{\varepsilon /2^{n}}\left( y_{n}\right) $ such that $\delta
\left( y_{n+1},L\right) <\varepsilon /2^{n+1}$. Dependent choice allows us
to form a sequence $y_{1},y_{2},\ldots $ with those properties.

Because $y_{n+1}\in B_{\varepsilon /2^{n}}\left( y_{n}\right) $, the
sequence of $y$'s is a Cauchy sequence, hence converges to a point $t$ in $M$%
. As $\delta \left( y_{n},L\right) $ converges to zero, $\delta \left(
t,L\right) =0$. As $L$ is a zero set of a uniformly continuous function, $%
t\in L$. Finally $d\left( x,y_{n}\right) <r+\varepsilon $ for all $n$, so $%
d\left( x,t\right) \leq r+\varepsilon $. \medskip
\end{proof}

With a little work, the proof of this theorem can be modified to apply to
any locally compact space $M$.

In Example \ref{Example 1}, the quasidistance from the point $0$ to the
closed set $L$ is equal to zero but $0$ need not be in $L$. That can't
happen if $L$ is the zero set of a continuous function. We write this as a
lemma for later use.

\begin{lemma}
\label{Lemma zero set}Let $f$ be a continuous function that vanishes on $L$.
If $\delta \left( x,L\right) =0$, then $f\left( x\right) =0$.
\end{lemma}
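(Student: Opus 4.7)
The plan is to argue by contradiction from the assumption $|f(x)| > 0$, using continuity of $f$ to produce a ball around $x$ that is disjoint from $L$, and then use the immediately preceding theorem to show that such a ball cannot exist. Since we end up showing $\lnot(|f(x)| > 0)$ and $|f(x)| \geq 0$ always, this forces $|f(x)| = 0$, hence $f(x) = 0$ (in a constructive real numbers framework, $\lnot(r > 0)$ for a nonnegative real $r$ yields $r \leq \varepsilon$ for every $\varepsilon > 0$, hence $r = 0$).

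In detail, suppose $|f(x)| > 0$. By continuity of $f$ at $x$, there exists $\eta > 0$ such that $d(x,y) < \eta$ implies $|f(y) - f(x)| < |f(x)|/2$, and therefore $|f(y)| > |f(x)|/2 > 0$. In particular no $y \in B_\eta(x)$ can lie in $L$, since $f$ vanishes on $L$; that is, $B_\eta(x) \cap L = \emptyset$. On the other hand, $\delta(x,L) = 0 \leq \eta/2$, so the preceding theorem (applied with $r = \eta/2$ and $s = \eta$) tells us that $B_\eta(x) \cap L$ cannot be empty. This is the desired contradiction, establishing $\lnot(|f(x)| > 0)$ and hence $f(x) = 0$.

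There is no real obstacle here; the only thing to be careful about is the final step from $\lnot(|f(x)| > 0)$ to $f(x) = 0$, which must be done via the standard constructive fact that a nonnegative real not known to be positive is arbitrarily small, so is zero. Everything else is just continuity plus a direct appeal to the ball characterization of $\delta(x,L) \leq r$ established just above.
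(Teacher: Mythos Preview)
Your proof is correct and follows essentially the same line as the paper's: assume $|f(x)|>0$, use continuity to produce a ball $B_\eta(x)$ disjoint from $L$, and contradict $\delta(x,L)=0$. The only cosmetic difference is that the paper invokes Lemma~\ref{Lemma anchor} directly (the empty ball makes $\eta$ a lower bound, so $\delta(x,L)\geq\eta$), while you route through the equivalent ball characterization of $\delta(x,L)\leq r$ stated just after it.
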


\begin{proof}
It's enough to show that $f\left( x\right) $ cannot be different from $0$.
Suppose $\left\vert f\left( x\right) \right\vert =\varepsilon >0$. As $f$ is
continuous at $x$, there is $\theta >0$ such that $f$ is nonzero in $%
B_{\theta }\left( x\right) $. Lemma \ref{Lemma anchor} then tells us that $%
\delta \left( x,L\right) \geq \theta $, a contradiction.
\end{proof}

\section{Quasifinite sets}

Even with choice, the zero set of a monic polynomial need not be finite or
even finitely enumerable. For example, let $a$ and $b$ are two real numbers
and consider the polynomial $p\left( X\right) =\left( X-a\right) \left(
X-b\right) $. In addition to the roots $a$ and $b$, the numbers $a\vee b$
and $a\wedge b$ are roots, and the ability to equate $a\vee b$ with either $%
a $ or $b$ is a form of LLPO. What we can say is that the zero set of $%
p\left( X\right) $ is the closure of the set $\left\{ a,b\right\} $. So what
we are thinking of here, in the presence of choice, are closures of finitely
enumerable sets.

What are we thinking of in the absence of choice? The example to keep in
mind is the zero set $L$ of the polynomial $X^{2}-a$ where $a$ is a small
complex number, possibly zero. Unless there is a sequence of Gaussian
numbers converging to $a$, see \cite{Wim}, we cannot necessarily construct
an element of $L$, see \cite{FTA}. But $L$ is quasilocated, as we shall see.
First a little terminology.

An $\varepsilon $\textbf{-quasiapproximation} to a subset $L$ of a metric
space $M$ is a sequence $x_{1},\ldots ,x_{n}$ in $M$ such that $%
B_{\varepsilon }\left( x_{i}\right) \cap L$ cannot be empty for $i=1,\ldots
,n$, and $L\subset B_{\varepsilon }\left( x_{1}\right) \cup \cdots \cup
B_{\varepsilon }\left( x_{n}\right) $. We say that $L$ is \textbf{%
quasi-totally-bounded} if for each $\varepsilon >0$ we can find an $%
\varepsilon $-quasi\-approximation to $L$. Unlike the notion of totally
bounded, this depends on the ambient space $M$ because we cannot assume that
the $\varepsilon $-quasi\-approximation lies in $L$. The subset $L$ in
Example \ref{Example 1} is quasi-totally-bounded but we can't find an
internal $1$-quasi\-approximation to it.

\begin{theorem}
Each quasi-totally-bounded subset of a metric space $M$ is quasi\-located.
\end{theorem}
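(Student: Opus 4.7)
The plan is to construct $\delta(x, L) = \glb\{d(x, y) : y \in L\}$ explicitly as a real number, using the $\varepsilon$-quasiapproximations to furnish tight rational sandwiches around it.

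First, for each $\varepsilon > 0$, I will fix an $\varepsilon/3$-quasiapproximation $x_1, \ldots, x_n$ of $L$ and set $m_\varepsilon = \min_i d(x, x_i)$ and $\rho_\varepsilon = m_\varepsilon - \varepsilon/3$. Two facts will do the work: (a) $\rho_\varepsilon$ is a lower bound of $\{d(x, y) : y \in L\}$, since any $y \in L$ lies in some $B_{\varepsilon/3}(x_i)$, whence $d(x, y) \geq d(x, x_i) - \varepsilon/3 \geq \rho_\varepsilon$; and (b) $\rho_\varepsilon + \varepsilon$ cannot be a lower bound, because choosing $j$ with $d(x, x_j) < m_\varepsilon + \varepsilon/3$, if $\rho_\varepsilon + \varepsilon$ were a lower bound then every $y \in L$ would satisfy $d(x_j, y) \geq d(x, y) - d(x, x_j) \geq \varepsilon/3$, making $B_{\varepsilon/3}(x_j) \cap L$ empty and contradicting the quasiapproximation hypothesis.

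Next, I will define the candidate real number $\alpha$ by the located Dedekind cut whose lower part consists of rationals $q$ with $q < \rho_\varepsilon$ for some $\varepsilon > 0$, and whose upper part consists of rationals $q$ with $q > \rho_\varepsilon + \varepsilon$ for some $\varepsilon > 0$. Disjointness follows from (a) and (b): a rational in both halves would give $\rho_{\varepsilon'} + \varepsilon' < \rho_\varepsilon \leq d(x, y)$ for every $y \in L$, making $\rho_{\varepsilon'} + \varepsilon'$ a lower bound in violation of (b). For locatedness, given rationals $p < q$ I will pick $\varepsilon$ with $2\varepsilon < q - p$ and apply cotransitivity to $\rho_\varepsilon$: either $\rho_\varepsilon > p$, so $p$ is below, or $\rho_\varepsilon < p + \varepsilon$, so $\rho_\varepsilon + \varepsilon < p + 2\varepsilon < q$ and $q$ is above.

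Finally, I will verify $\alpha = \glb\{d(x, y) : y \in L\}$. It is a lower bound because any rational $q < \alpha$ satisfies $q < \rho_\varepsilon \leq d(x, y)$ for some $\varepsilon$ and every $y \in L$, so $\alpha \leq d(x, y)$. It is the greatest lower bound: if $b$ were a lower bound strictly exceeding $\alpha$, a rational $q$ with $\alpha < q < b$ would satisfy $q > \rho_\varepsilon + \varepsilon$ for some $\varepsilon$, and $\rho_\varepsilon + \varepsilon < q < b \leq d(x, y)$ for every $y \in L$ would then make $\rho_\varepsilon + \varepsilon$ a lower bound, contradicting (b). The main subtlety I expect is tracking the constructive content of ``cannot be empty'' (a double negation) through step (b) carefully, and ensuring that the cotransitivity argument in the locatedness step genuinely delivers a real number in the absence of the law of excluded middle.
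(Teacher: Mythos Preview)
Your proof is correct and follows essentially the same approach as the paper. Both arguments use an $\varepsilon$-quasiapproximation to sandwich the candidate value of $\delta(x,L)$ between a genuine lower bound $r$ for $\{d(x,y):y\in L\}$ and a number $r+3\varepsilon$ (your $\rho_\varepsilon+\varepsilon$) that cannot be a lower bound; the paper simply concludes ``Thus we can approximate $\delta(z,L)$ within $3\varepsilon/2$,'' whereas you spell out the resulting Dedekind cut explicitly.
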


\begin{proof}
Let $L$ be a quasi-totally-bounded subset of $M$. For any $\varepsilon >0$,
let $x_{1},\ldots ,x_{n}$ be an $\varepsilon $-quasi\-approximation to $L$.
First we show, for each $z$ in $M$, that $r=d\left( z,\left\{ x_{1},\ldots
,x_{n}\right\} \right) -\varepsilon $ is a lower bound for $T=\left\{
d\left( z,y\right) :y\in L\right\} $. Given $y$ in $L$ there is $i$ such
that $d\left( x_{i},y\right) <\varepsilon $, so $d\left( z,x_{i}\right) \leq
d\left( z,y\right) +d\left( y,x_{i}\right) <d\left( z,y\right) +\varepsilon $%
. Thus%
\begin{equation*}
r+\varepsilon \leq d\left( z,x_{i}\right) <d\left( z,y\right) +\varepsilon
\end{equation*}%
so $r<d\left( z,y\right) $. Next we show that $r+3\varepsilon $ is not a
lower bound for $T$. We know that $d\left( z,x_{i}\right) <r+2\varepsilon $
for some $i$, so $B_{\varepsilon }\left( x_{i}\right) \subset
B_{r+3\varepsilon }\left( z\right) $. We also know that $B_{\varepsilon
}\left( x_{i}\right) \cap L$ cannot be empty, so $B_{r+3\varepsilon }\left(
z\right) \cap L$ cannot be empty, whence $r+3\varepsilon $ is not a lower
bound for $T$. Thus we can approximate $\delta \left( z,L\right) $ within $%
3\varepsilon /2$. \medskip
\end{proof}

We say that $L$ is a \textbf{quasi}-$n$-\textbf{set} if for each $%
\varepsilon >0$ there is an $\varepsilon $-quasi\-approximation to $L$ of
length $n$. Note that a singleton is a quasi-$n$-set for every positive $n$,
so $n$ is just an upper bound on the size of $L$ rather than its size. We
say that $L$ is \textbf{quasifinite} if it is a quasi-$n$-set for some $n$.
Clearly a quasifinite set is quasi-totally-bounded.

The meaning of $\delta \left( x,L\right) <t$, when $\delta \left( x,L\right) 
$ does not necessarily exist, follows from the general theory of lowercuts.
It is the second condition in the following lemma.

\begin{lemma}
If $\delta \left( x,L\right) $ exists, then $\delta \left( x,L\right) <t$ if
and only if $B_{r}\left( x\right) \cap L$ cannot be empty for some $r<t$.
\end{lemma}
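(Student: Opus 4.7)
The plan is to treat the two directions separately, using Lemma \ref{Lemma anchor} as the bridge between the lower-bound formulation and the ball-emptiness formulation.

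For the forward direction, I would start from $\delta(x,L) < t$ and pick a real $r$ strictly between $\delta(x,L)$ and $t$. Since $\delta(x,L) = \glb T$ is by definition the \emph{largest} element of the set $B$ of lower bounds of $T = \{d(x,y) : y \in L\}$, and $r$ exceeds this maximum, $r$ cannot itself belong to $B$; that is, $r$ cannot be a lower bound for $T$. Feeding this into the contrapositive of Lemma \ref{Lemma anchor} yields that $B_r(x) \cap L$ cannot be empty, as required.

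For the backward direction, suppose $B_r(x) \cap L$ cannot be empty for some $r < t$. I would choose an intermediate real $r'$ with $r < r' < t$ and apply cotransitivity of $<$ on the reals to the inequality $r < r'$: either $\delta(x,L) < r'$, in which case $\delta(x,L) < t$ as desired, or $r < \delta(x,L)$. In the second case, because $\delta(x,L)$ is a lower bound for $T$, every $y \in L$ satisfies $d(x,y) \geq \delta(x,L) > r$, so $B_r(x) \cap L$ is empty, contradicting the hypothesis. Hence only the first alternative survives.

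The main obstacle is the backward direction. Constructively one cannot pass directly from ``assuming $B_r(x) \cap L$ is empty leads to a contradiction'' to the positive statement $\delta(x,L) < t$, since that would require eliminating a double negation on a real-number inequality. The essential move is to introduce the auxiliary $r'$ and invoke cotransitivity of the strict order, which converts the refutation into a genuine strict inequality without appealing to excluded middle.
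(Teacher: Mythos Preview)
Your proposal is correct and follows essentially the same route as the paper: both directions go through Lemma~\ref{Lemma anchor} via the equivalence ``$r$ is a lower bound for $\{d(x,y):y\in L\}$'' $\iff$ ``$B_r(x)\cap L=\emptyset$'' $\iff$ ``$r\le\delta(x,L)$''. The paper's backward step is terser---it simply notes that $r\le\delta(x,L)$ is impossible and concludes $\delta(x,L)<t$---whereas you spell out the cotransitivity argument explicitly; your intermediate point $r'$ is harmless but unnecessary, since cotransitivity can be applied directly to $r<t$.
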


\begin{proof}
We know that $r\leq \delta \left( x,L\right) $ if and only if $B_{r}\left(
x\right) \cap L$ is empty (Lemma \ref{Lemma anchor}), so if $\delta \left(
x,L\right) <t$, then $B_{r}\left( x\right) \cap L$ cannot be empty for $%
r=\left( \delta \left( x,L\right) +t\right) /2$. Conversely, if $B_{r}\left(
x\right) \cap L$ cannot be empty for some $r<t$, then $r\leq \delta \left(
x,L\right) $ is impossible, so $\delta \left( x,L\right) <t$. \medskip
\end{proof}

We will show, in Corollary \ref{Corollary qlrootset}, that the zero set of a
monic polynomial is quasifinite.

\section{The spectrum of a polynomial}

Let $f$ be a monic polynomial of degree $n\geq 1$ with complex coefficients.
We can approximate $f$ arbitrarily closely with polynomials of the form 
\begin{equation*}
\left( X-q_{1}\right) \left( X-q_{2}\right) \cdots \left( X-q_{n}\right)
\end{equation*}%
where the $q_{i}$ are Gaussian numbers. The \textbf{spectrum} of $f$,
written $\spec f$, is an element of the completion of the set of $n$%
-multisets of Gaussian numbers in an appropriate metric, see \cite{FTA}.
Classically, $\spec f$ can be identified with the multiset of roots
of $f$. Constructively, with choice, $\spec f$ can be identified
with an equivalence class of multisets $r_{1},\ldots ,r_{n}$ where two
multisets are equivalent if $\prod_{i=1}^{n}\left( X-r_{i}\right)
=\prod_{i=1}^{n}\left( X-r_{i}^{\prime }\right) $.

The choiceless fundamental theorem of algebra says that the function $%
\spec$ is a homeomorphism from the space of monic polynomials of
degree $n$, with the natural metric, to the completion of the set of $n$%
-multisets of Gaussian numbers.

The \textbf{distance}, $d\left( z,\spec f\right) $, from a complex
number $z$ to $\spec f$ is approximated by the distance from $z$ to
the approximating multisets $\left\{ q_{1},q_{2},\ldots ,q_{n}\right\} $.
Note that $d\left( z,\spec f\right) $ is independent of
multiplicities. It is not hard to show that $d\left( z,\spec %
f\right) =0$ if and only if $f\left( z\right) =0$. That is the connection
between $\spec f$ and the zero set of $f$.

The \textbf{diameter} of $\spec f$ is the limit of the the diameters
of its approximating $n$-multisets. Caution: this need not be a limit of a 
\textit{sequence} of approximating $n$-multisets---we are operating in the
absence of countable choice. The diameter can be characterized in terms of $%
d\left( z,\spec f\right) $ as 
\begin{equation*}
\inf_{\varepsilon >0}\sup \left\{ \left\vert z-z^{\prime }\right\vert
:d\left( z,\spec f\right) <\varepsilon \text{ and }d\left( z^{\prime
},\spec f\right) <\varepsilon \right\}
\end{equation*}

The (set-)\textbf{distance} between $\spec f$ and $\spec g$
is defined to be the infimum over all $z$ of 
\begin{equation*}
d\left( z,\spec f\right) +d\left( z,\spec g\right) .
\end{equation*}
It is not hard to see that this infimum exists, and is equal to the limit of
the distance between the approximations to $\spec f$ and $%
\spec g$. This is like the distance between two sets of complex
numbers. It is not a metric and should not be confused with the metric
distance between the spectra of two monic polynomials of the same degree
that comes from their both being in the completion of the set of $n$%
-multisets of Gaussian numbers.

We will need this lemma.

\begin{lemma}
\label{Lemma factor}Let $f$ be a monic polynomial of degree $n\geq 1$ with
complex coefficients. If the diameter of $\spec f$ is positive, then 
$f=gh$ for nonconstant polynomials $g$ and $h$. Moreover, the distance
between $\spec g$ and $\spec h$ is positive.
\end{lemma}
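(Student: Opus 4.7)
The plan is to find a circle in the complex plane that avoids $\spec f$ by a definite margin and has points of $\spec f$ on both sides. Such a circle will separate any sufficiently fine Gaussian approximation of $\spec f$ into two nonempty parts whose products provide the approximations to the factors $g$ and $h$.

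Suppose the diameter of $\spec f$ exceeds $6\eta > 0$. The characterization of the diameter yields, for every $\varepsilon > 0$, Gaussian numbers $z_0, z_1$ with $d(z_0, \spec f), d(z_1, \spec f) < \varepsilon$ and $|z_0 - z_1| > 6\eta$. Fix such a pair with $\varepsilon$ small relative to $\eta$, and fix a Gaussian approximation $q_1, \ldots, q_n$ to $\spec f$ fine enough that some $|q_i - z_0|$ is near $0$ and some is near $|z_0 - z_1|$. Partition $[0,\, |z_0 - z_1| + \eta]$ into $4n+3$ equal sub-intervals of length $2\theta$. Since at most $n$ of these contain any of the $n$ values $|q_i - z_0|$, the empty sub-intervals are divided into at most $n+1$ blocks of average length at least $3$, so some three consecutive sub-intervals are all empty. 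Let $r$ be the midpoint of the middle sub-interval of such a triple. Then every $|q_i - z_0|$ lies outside $(r - 3\theta,\, r + 3\theta)$, and the sets $I = \{i : |q_i - z_0| < r\}$ and $J = \{i : |q_i - z_0| > r\}$ partition $\{1, \ldots, n\}$ with both parts nonempty.

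The width-$6\theta$ gap around the circle $|z - z_0| = r$ makes the partition robust: any Gaussian approximation $\{q_i'\}$ fine enough (within accuracy less than $\theta$) can be split by the same circle, matching the original partition up to perturbation. Hence the approximate products $g_{\text{approx}} = \prod_{i \in I}(X - q_i)$ and $h_{\text{approx}} = \prod_{i \in J}(X - q_i)$ converge, as the approximation is refined, to monic polynomials $g$ and $h$ of positive degree; by continuity of multiplication, $f = gh$. Moreover every Gaussian approximation to $\spec g$ lies in the open disk $|z - z_0| < r - 3\theta$ and every such approximation to $\spec h$ outside the closed disk $|z - z_0| \leq r + 3\theta$, so a triangle-inequality calculation shows that for every $z \in \mathbb{C}$, $d(z, \spec g) + d(z, \spec h) \geq 6\theta$, and hence the set-distance between $\spec g$ and $\spec h$ is at least $6\theta > 0$.

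The main obstacle is that, without countable choice, we cannot assemble a sequence of successively finer approximations and take a limit; the construction must be argued to be coherent across all sufficiently fine approximations simultaneously. The quantitative gap of width $6\theta$ around the separating circle is essential---without it, a tiny perturbation of the approximation could shuffle a root across the circle and destroy the factorization. The pigeonhole refinement over $4n+3$ sub-intervals is what produces such a gap while remaining blind to the identities of individual roots, which we cannot access constructively.
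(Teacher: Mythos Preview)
Your approach is sound and reaches the same conclusion, but it differs from the paper's proof in a notable way. The paper does not search for a separating circle. Instead, it fixes a single Gaussian approximation $q_1,\ldots,q_n$ to $\spec f$ within $r/(8n)$ (where $0<r\le\text{diameter}$) and partitions the $q_i$ by the \emph{transitive closure} of the relation $|x-y|\le r/(2n)$. Since any single equivalence class has diameter at most $(n-1)r/(2n)<r/2$ while the approximation has diameter close to $r$, there are at least two classes; one is taken as $G_0$ and the union of the rest as $H_0$, with minimum separation automatically exceeding $r/(2n)$. This clustering argument is symmetric and needs no distinguished center point, whereas your circle method is more geometric and hinges on choosing $z_0$ and then locating a good radius by pigeonhole. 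Both routes produce the same essential ingredient---a quantitative gap robust under refinement of the approximation---which is what makes the choiceless passage to the limit factors $g,h$ possible.

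There is, however, a small gap in your pigeonhole step. You partition $[0,\,|z_0-z_1|+\eta]$ into $4n+3$ sub-intervals and correctly find a run of three consecutive empty ones, but nothing prevents that run from sitting entirely in the overhang of length $\eta$ to the right of $|z_0-z_1|$. That overhang occupies about $\eta/(2\theta)=(4n+3)\eta/(|z_0-z_1|+\eta)$ sub-intervals, which can be $3$ or more once $n\ge 5$ (taking $|z_0-z_1|$ close to $6\eta$). In that case your chosen $r$ lies to the right of every $|q_i-z_0|$ and $J$ is empty. The repair is easy: drop the $+\eta$ and partition $[0,\,|z_0-z_1|]$ (or $[\min_i|q_i-z_0|,\,\max_i|q_i-z_0|]$), taking the approximation fine enough that both end sub-intervals are occupied; then every empty run is internal and $I,J$ are automatically nonempty. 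A related minor over-claim: refined approximations to $\spec g$ land in $|z-z_0|<r-2\theta$ rather than $r-3\theta$ (you lose one $\theta$ in passing to a new approximation), but this still yields a positive separation and the conclusion stands.
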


\begin{proof}
Let $d$ be the diameter of $\spec f$ and let $r$ be a rational
number such that $d\geq r>0$. Let $q_{1},\ldots ,q_{n}$ be Gaussian numbers
that approximate the spectrum of $f$ to within $r/\left( 8n\right) $. The
transitive closure of the relation $\left\vert x-y\right\vert \leq r/\left(
2n\right) $ partitions the $q_{i}$ into at least two equivalence classes
(thought of as multisets). Let $G_{0}$ be one of them and $H_{0}$ the union
of all the rest. The minimum distance $\varepsilon $ between a point in $%
G_{0}$ and a point in $H_{0}$ is greater than $r/\left( 2n\right) $.

Let $m$ be the number of elements of $G_{0}$, including multiplicities. Any
multiset of Gaussian numbers that approximates the spectrum of $f$ to within 
$r/\left( 8n\right) $ is uniquely the union of an $m$-multiset $G$ and an $%
\left( m-n\right) $-multiset $H$, where $G$ is within $r/\left( 4n\right) $
of $G_{0}$, as $m$-multisets, and $H$ is within $r/\left( 4n\right) $of $%
H_{0}$. So the distance between any point in $G$ and any point in $H$ is at
least $\varepsilon -r/\left( 2n\right) $. Let $g$ and $h$ be the monic
polynomials whose spectra are the limits of the multisets $G$ and $H$
respectively. That is, $g$ is the limit of the polynomials $\Pi _{s\in
G}\left( X-s\right) $. The distance between $\spec g$ and $\spec h$ 
is at least $\varepsilon -r/\left( 2n\right) $. \medskip
\end{proof}

Any easy induction gives us the theorem.

\begin{theorem}
\label{Theorem qfspec}Let $f$ be a monic polynomial of degree $n\geq 1$ with
complex coefficients. For each $\varepsilon >0$ we can write $%
f=g_{1}g_{2}\cdots g_{k}$ where the diameter of each $\spec g_{i}$
is less than $\varepsilon $ and the $\spec g_{i}$ are a positive
distance from each other.
\end{theorem}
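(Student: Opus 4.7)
The plan is to induct on the degree $n$ of $f$. The base case $n=1$ is immediate: $\spec f$ is a single point, with diameter zero, so one takes $k=1$ and $g_1=f$. For the inductive step, fix $\varepsilon>0$. The diameter of $\spec f$ is a constructively given real number, approximable through the diameters of the Gaussian approximating multisets, so the standard dichotomy applies: either (a) the diameter is less than $\varepsilon$, or (b) the diameter exceeds $\varepsilon/2$. The two options may overlap, which is constructively harmless.

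In case (a), take $k=1$ and $g_1=f$. In case (b), Lemma \ref{Lemma factor} yields a factorization $f=gh$ in which both $g$ and $h$ are nonconstant and the set-distance between $\spec g$ and $\spec h$ is strictly positive. Because $g$ and $h$ both have degree strictly less than $n$, the inductive hypothesis applied to each (with the same $\varepsilon$) produces factorizations $g=g_1\cdots g_j$ and $h=h_1\cdots h_m$ meeting the theorem's conclusion. Concatenating them produces the required factorization of $f$ in which every factor has spectrum of diameter less than $\varepsilon$.

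It remains to check that the spectra in the concatenated list are pairwise a positive distance apart. Within each of the two sublists this is inherited from the inductive hypothesis. For a cross pair $\spec g_i$ and $\spec h_l$, observe that since $g_i$ divides $g$, any Gaussian multiset approximating $\spec g_i$ extends, by adjoining an approximation for the roots of $g/g_i$, to one approximating $\spec g$; hence $d(z,\spec g)\leq d(z,\spec g_i)$ for every complex $z$, and analogously $d(z,\spec h)\leq d(z,\spec h_l)$. Therefore
\[
\inf_{z}\bigl[d(z,\spec g_i)+d(z,\spec h_l)\bigr] \;\geq\; \inf_{z}\bigl[d(z,\spec g)+d(z,\spec h)\bigr] \;>\; 0,
\]
which is what is needed.

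The principal obstacle, modest as it is, is not any estimate but constructive bookkeeping: one must confirm that the dichotomy on the diameter is a genuine (overlapping) binary case split rather than a trichotomy that would require LEM, and that the reduction to lower degree is honest---which it is, because Lemma \ref{Lemma factor} guarantees both factors $g$ and $h$ to be nonconstant, so the induction on $n$ terminates.
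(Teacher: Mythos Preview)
Your proof is correct and follows essentially the same route as the paper's: a constructive dichotomy on the diameter of $\spec f$, Lemma~\ref{Lemma factor} in the large-diameter case, and induction on degree. The paper's version is terser---it uses the dichotomy ``diameter $<\varepsilon$ or diameter $>0$'' and simply asserts that the cross-pair distances are positive---whereas you spell out the inequality $d(z,\spec g)\leq d(z,\spec g_i)$ for divisors, which is exactly what justifies that assertion.
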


\begin{proof}
If the diameter $d$ of $\spec f$ is less than $\varepsilon $, then
let $k=1$ and $g_{1}=f$. Otherwise $d>0$ and from Lemma \ref{Lemma factor}
we can write $f=gh$ for nonconstant polynomials $g$ and $h$. By induction on
degree we can write $g$ and $h$ in the desired form. Clearly the spectrum of
any factor of $g$ is a positive distance from the spectrum of any factor of $%
h$. \medskip
\end{proof}

\section{Weak fundamental theorem of algebra}

We want to show that the zero set $L$ of a monic polynomial $p$ is
quasilocated. We know that the quasidistance from $z$ to $L$ has to be $%
r=d\left( z,\spec p\right) $. But to prove that, we must show that
it is impossible for $B_{r+\varepsilon }\left( z\right) \cap L$ to be empty
for $\varepsilon >0$ (Lemma \ref{Lemma anchor}). In particular, we must show
that it is impossible for the zero set of a monic polynomial of degree at
least $1$ over the complex numbers to be empty---the weak fundemental
theorem of algebra.

Suppose we have two polynomials%
\begin{eqnarray*}
a &=&a_{m}X^{m}+a_{m-1}X^{m-1}+\cdots +a_{1}X+a_{0} \\
b &=&b_{n}X^{n}+b_{n-1}X^{n-1}+\cdots +b_{1}X+b_{0}
\end{eqnarray*}%
with coefficients in a commutative ring $R$. The so-called \textbf{%
pseudodivision algorithm}, which is proved in essentially the same way as
the division algorithm for monic polynomials, says that if $0<m\leq n$, then
we can find canonical $q$ and $r$, with $\deg r<m$, such that%
\begin{equation*}
a_{m}^{n-m+1}b=qa+r
\end{equation*}%
We will denote the remainder $r$ by $\rho \left( b,a\right) $ and we say
that $\rho \left( b,a\right) $ is the result of \textbf{remaindering} $b$ by 
$a$.

The coefficients of $q$ and $r$ can be written as polynomials in the
coefficients of $a$ and $b$. The computation really takes place in the
polynomial ring%
\begin{equation*}
\mathbf{Z}\left[ \alpha _{m},\ldots ,\alpha _{0},\beta _{n},\ldots ,\beta
_{0}\right] \left[ X\right]
\end{equation*}%
where the $\alpha $'s and $\beta $'s are indeterminates, so we might as well
assume that $R$ is discrete. Note that $\rho \left( b,a\right) $ is defined
only when $0\leq \deg a<\deg b$.

Let $R$ be a discrete commutative ring. If $a$ is any nonzero polynomial in $%
R\left[ X\right] $, then $\gamma \left( a\right) $ is defined to be the
result of deleting the leading term of $a$. We say that $\gamma \left(
a\right) $ is the \textbf{chop} of $a$. Set $\gamma \left( 0\right) =0$.

If $A$ and $B$ are subsets of $R\left[ X\right] $, then $\rem\left(
A,B\right) $ is defined to be 
\begin{equation*}
\left\{ p\in R\left[ X\right] :p=\rho \left( a,b\right) \text{ or }p=\rho
\left( b,a\right) \text{ for some }\left( a,b\right) \in A\times B\right\}
\end{equation*}%
and $\chop\left( A\right) $ is defined to be $\left\{ \gamma \left(
a\right) :a\in A\right\} $. We say that $A$ is \textbf{closed under chopping
and remaindering} if $\rem\left( A,A\right) \subset A$ and $%
\chop\left( A\right) \subset A$.

\begin{lemma}
\label{Lemma chop}Given a finite set $S$ of polynomials over a discrete
commutative ring, there is a finite set of polynomials containing $S$ that
is closed under chopping and remaindering.
\end{lemma}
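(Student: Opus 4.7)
The plan is to induct on $M=\max_{s\in S}\deg s$. The naive attempt---chop and remainder everything in $S$ once to get a set of lower max degree, close it by induction, then re-adjoin the original degree-$M$ polynomials---fails, because putting back the degree-$M$ elements forces one to pseudoremainder them against the newly generated lower-degree elements, whose outputs need not have been in the inductive closure. To make the induction close up I would strengthen the statement so that these cross-interactions are tracked explicitly by an auxiliary set $U$ of ``high-degree witnesses.''

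Specifically, I would prove: \emph{for any finite set $S$ of polynomials of max degree $M$ and any finite set $U$ of polynomials each of degree strictly greater than $M$, there is a finite set $T\supseteq S$ that is closed under chopping and remaindering and such that $\rho(u,t)\in T$ for every $u\in U$ and every $t\in T$ with $\deg t<\deg u$.} The original lemma is the case $U=\emptyset$.

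For the base case $M\leq 0$, take $T=S\cup\{0\}$: chops of constants are $0$, pseudoremainders among constants are $0$ or undefined, and $\rho(u,t)$ has degree strictly less than $\deg t\leq 0$, hence equals $0$. For the inductive step, write $S=S_M\cup S_{<M}$ with $S_M$ the degree-$M$ elements, and set
$$S'=S_{<M}\cup\chop(S_M)\cup\{\rho(u,s):u\in U,\ s\in S_M\},\qquad U'=U\cup S_M.$$
Then $\max\deg S'\leq M-1$ while every element of $U'$ has degree at least $M$, strictly greater than $\max\deg S'$. The inductive hypothesis applied to $(S',U')$ yields a finite $T'\supseteq S'$ with the required properties; put $T=T'\cup S_M$. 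A case analysis on pairs in $T\times T$ verifies that $T$ is closed: the only nontrivial remainders are $\rho(s,t)$ with $s\in S_M\subseteq U'$, $t\in T'$, and $\deg t<M=\deg s$, which lie in $T'$ by the $U'$-condition on $T'$. The $U$-condition for $T$ splits analogously: for $t\in T'$ it is the $U'$-condition on $T'$, and for $t\in S_M$ it holds because $\rho(u,s)\in S'\subseteq T'$ was placed into $S'$ by construction.

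The main obstacle---and the reason for the strengthening---is that pseudoremainder does not respect any one-shot filtration by degree: closing up the low-degree part introduces genuinely new polynomials, and each of them still needs to be remaindered against the retained high-degree elements of $S$. The auxiliary set $U$ is exactly the bookkeeping that lets the recursion absorb these cross-interactions level by level.
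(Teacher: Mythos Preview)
Your argument works, with one clause that should be added to the strengthened statement: $\max\deg T\leq M$. Without it, when you set $T=T'\cup S_M$ and declare that ``the only nontrivial remainders are $\rho(s,t)$ with $s\in S_M$, $t\in T'$, $\deg t<M$,'' you are tacitly assuming every $t\in T'$ has degree below $M$; otherwise $\rho(t,s)$ with $t\in T'$ of degree exceeding $M$ and $s\in S_M$ would also need checking, and nothing in your inductive hypothesis as stated rules that out. The bound is trivially preserved through the recursion (chop, $\rem$, and $\rho(u,\cdot)$ all strictly lower degree), so this is bookkeeping rather than a real obstacle.

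The paper's proof is substantially simpler and avoids the strengthening altogether. It just iterates: set $S_0=S$ and $S_i=S_{i-1}\cup\chop(S_{i-1})\cup\rem(S_{i-1},S_{i-1})$, then observes that any element of $S_i\setminus S_{i-1}$ must arise from chopping or remaindering with at least one input already in $S_{i-1}\setminus S_{i-2}$, hence has degree at most $n-i$ where $n$ bounds the degrees in $S$. After $n$ steps only constants can appear, and the chain stabilizes. The cross-interactions that drove you to introduce $U$---remaindering retained high-degree polynomials against freshly generated low-degree ones---are absorbed automatically by this iteration: each such remainder simply shows up at the next stage with the appropriate degree drop, so no auxiliary bookkeeping set is needed. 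Your $U$-augmented induction is a correct repackaging of the same descent, but here the direct iteration is both shorter and makes finiteness immediate.
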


\begin{proof}
Define $S_{0}$ to be $S$. Let $n$ be a bound on the degrees of the
polynomials in $S_{0}$. For $i>0$, define%
\begin{equation*}
S_{i}=S_{i-1}\cup \chop\left( S_{i-1}\right) \cup \rem 
\left( S_{i-1},S_{i-1}\right)
\end{equation*}%
We will show that, for $i\geq 1$, the elements of $S_{i}\setminus S_{i-1}$
have degree at most $n-i$. That's clearly true for $i=1$, and if $i>1$, and $%
p\in S_{i}\setminus S_{i-1}$, then%
\begin{equation*}
p\in \chop\left( S_{i-1}\setminus S_{i-2}\right) \cup \rem%
\left( S_{i-1},S_{i-1}\setminus S_{i-2}\right)
\end{equation*}%
whence, by induction on $i$, we have $\deg p<n-\left( i-1\right) $ so $\deg
p\leq n-i$. In particular, the elements of $S_{n}\setminus S_{n-1}$ have
degree at most $0$, so $S_{n}$ is closed under chopping and remaindering.
\medskip
\end{proof}

Apply Lemma \ref{Lemma chop} to the set $S$ of two polynomials%
\begin{eqnarray*}
&&\alpha _{m}X^{m}+\alpha _{m-1}X^{m-1}+\cdots +\alpha _{1}X+\alpha _{0} \\
&&\beta _{n}X^{n}+\beta _{n-1}X^{n-1}+\cdots +\beta _{1}X+\beta _{0}
\end{eqnarray*}%
over the ring $\mathbf{Z}\left[ \alpha _{m},\ldots ,\alpha _{0},\beta
_{n},\ldots ,\beta _{0}\right] $, where the $\alpha $'s and $\beta $'s are
indeterminates. Let $L$ be the set of leading coefficients of polynomials in 
$S^{\prime }$. The elements of $L$ are in $\mathbf{Z}\left[ \alpha
_{m},\ldots ,\alpha _{0},\beta _{n},\ldots ,\beta _{0}\right] $. If we are
then given two polynomials%
\begin{eqnarray*}
a &=&a_{m}X^{m}+a_{m-1}X^{m-1}+\cdots +a_{1}X+a_{0} \\
b &=&b_{n}X^{n}+b_{n-1}X^{n-1}+\cdots +b_{1}X+b_{0}
\end{eqnarray*}%
with coefficients in an arbitrary commutative ring $R$, we can form the
subset $L\left( a,b\right) $ of $R$ obtained by plugging $a_{i}$ and $b_{j}$
for $\alpha _{i}$ and $\beta _{j}$ in the elements of $L$. We are now ready
to prove a general Bezout's equation for polynomials over an arbitrary
commutative ring.

\begin{theorem}
Let%
\begin{eqnarray*}
a &=&a_{m}X^{m}+a_{m-1}X^{m-1}+\cdots +a_{1}X+a_{0} \\
b &=&b_{n}X^{n}+b_{n-1}X^{n-1}+\cdots +b_{1}X+b_{0}
\end{eqnarray*}%
be polynomials over a commutative ring $R$. If each of the elements of $%
L\left( a,b\right) $ is either $0$ or invertible, then there exist $s$ and $%
t $ in $R\left[ X\right] $ such that $sa+tb$ divides both $a$ and $b$ and is
either monic or $0$.
\end{theorem}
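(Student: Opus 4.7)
The plan is to run a pseudodivision-based Euclidean algorithm on $(a,b)$ by induction on the sum $m+n$ of their formal degrees. The hypothesis will enable each case-split: every coefficient $a_i$ and $b_j$ appears as the leading coefficient of an iterated chop of $a$ or $b$, hence lies in $L(a,b)$, and by hypothesis each is decidably zero or invertible. The same mechanism will ensure the hypothesis is inherited after a reduction: when we pass from $(a,b)$ to either $(\gamma(a),b)$ or $(a,\rho(b,a))$, both $\gamma(A)$ and $\rho(B,A)$ lie symbolically in the closure $S'$ of $\{A,B\}$, so any polynomial built from the reduced symbolic pair by further chopping and remaindering is already in $S'$; its leading coefficient is in the original symbolic $L$, and the substitution $\alpha_i\mapsto a_i$, $\beta_j\mapsto b_j$ transports this to the inclusion in $L(a,b)$.

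For the base case $m+n=0$, both $a=a_0$ and $b=b_0$ are constants, each $0$ or invertible, and the Bezout representation is immediate. For the inductive step, I would assume WLOG that $m\leq n$ and inspect $a_m$. If $a_m=0$ and $m\geq 1$, then $a=\gamma(a)$ in $R[X]$ and induction applied to $(\gamma(a),b)$ yields the representation directly. If $a_m$ is invertible and $m\geq 1$, form $r=\rho(b,a)$ satisfying $a_m^{n-m+1}b=qa+r$ with $\deg r<m$; induction on $(a,r)$ yields $s''a+t''r=g$ with $g\mid a$, $g\mid r$, and $g$ monic or zero. Substituting for $r$ gives $sa+tb=g$ with $s=s''-t''q$ and $t=t''a_m^{n-m+1}$, and from $g\mid qa+r=a_m^{n-m+1}b$ together with invertibility of $a_m$ one concludes $g\mid b$. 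The remaining subcase $m=0$, $a_0=0$ reduces the problem to the pair $(0,b)$: iterate chops on $b$ until either $b_n$ is invertible---in which case $(s,t)=(0,b_n^{-1})$ gives the monic divisor $b_n^{-1}b$---or $b=0$, in which case $(s,t)=(0,0)$ works.

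The main obstacle, and the only part that really requires care, is verifying that $L$ of a reduced pair embeds into $L(a,b)$: this is exactly the closure-under-chopping-and-remaindering property of $S'$ built up in Lemma \ref{Lemma chop}. Once that inheritance is in hand, the recursion is a standard pseudoremainder Euclidean computation, and the Bezout coefficients compose algebraically through the pseudodivision identity, with the factor $a_m^{n-m+1}$ absorbed into $t$ and inverted at the end so that divisibility of $a_m^{n-m+1}b$ by $g$ upgrades to divisibility of $b$ by $g$.
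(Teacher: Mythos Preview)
Your proposal is correct and follows essentially the same approach as the paper: both run the Euclidean algorithm on $a$ and $b$, using the hypothesis on $L(a,b)$ to decide, at each stage, which coefficients of the current remainder vanish and which are units. The paper's version is simply terser---it iterates directly and observes that every coefficient ever inspected is the substitution of a leading coefficient from the symbolic closure $S'$, hence lies in $L(a,b)$---whereas your inductive framing on $m+n$ obliges you to check that the hypothesis is inherited by the reduced pair, which you identify correctly as a consequence of the closure property of $S'$ (strictly, one also uses that the substitution sending the fresh generic pair to $(A,\rho(B,A))$ is injective, so that chops and remainders commute with it).
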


\begin{proof}
We attempt to execute the Euclidean algorithm on $a$ and $b$. This will
succeed if, each time we create a new remainder, we can write it as a
polynomial with an invertible leading coefficient. But that is exactly what
the hypothesis of the theorem allows us to do. The elements of $L\left(
a,b\right) $ include all the coefficients of $a$ and $b$ and anything we
might get by repeatedly applying the division algorithm. Unless $a=b=0$, the
last nonzero remainder from the algorithm is of the form $sa+tb$ and divides
both $a$ and $b$. We can divide by its leading coefficient to make it monic.
\medskip
\end{proof}

As an immediate corollary we get

\begin{corollary}
\label{Corollary Bezout}Let $R$ be a ring in which any noninvertible element
is zero. If $a$ and $b$ are two polynomials in $R\left[ X\right] $ then we
can construct propositions $P_{1},\ldots ,P_{k}$ such that if the
conjunction 
\begin{equation*}
\left( P_{1}\vee \lnot P_{1}\right) \wedge \left( P_{2}\vee \lnot
P_{2}\right) \wedge \cdots \wedge \left( P_{k}\vee \lnot P_{k}\right)
\end{equation*}%
of instances of the law of excluded middle holds, then there exist $s$ and $%
t $ in $R\left[ X\right] $ such that $sa+tb$ divides both $a$ and $b$ and is
either monic or $0$. \medskip
\end{corollary}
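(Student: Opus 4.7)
The plan is to reduce the corollary directly to the preceding theorem by showing that, under the standing hypothesis on $R$, each ``$\ell\in L(a,b)$ is $0$ or invertible'' reduces to an instance of the law of excluded middle for equality with $0$.

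First I would note that $L(a,b)$ is a finite subset of $R$. Indeed, $L$ was defined as the set of leading coefficients of $S'$, where $S'$ is a finite closure under chopping and remaindering supplied by Lemma \ref{Lemma chop}; substituting the coefficients $a_i,b_j$ of $a,b$ for the indeterminates $\alpha_i,\beta_j$ yields the finite set $L(a,b)\subset R$. Enumerate its elements as $\ell_1,\ldots,\ell_k$, and for each $i$ let $P_i$ be the proposition ``$\ell_i=0$''. These are the propositions called for in the corollary, and the construction is explicit from $a,b$ and the universal data $S'$.

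Next I would observe the key reduction: by the assumption on $R$, an element that fails to be invertible must be $0$, equivalently $\ell\neq 0$ implies $\ell$ is invertible. Hence, assuming $P_i\vee\neg P_i$, we get either $\ell_i=0$ or $\ell_i\neq 0$, and in the second case $\ell_i$ is invertible. Taking the conjunction of all $P_i\vee\neg P_i$ for $i=1,\ldots,k$ therefore yields the hypothesis ``each element of $L(a,b)$ is $0$ or invertible'' of the preceding theorem.

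Finally I would invoke that theorem to produce $s,t\in R[X]$ with $sa+tb$ dividing both $a$ and $b$ and either monic or $0$, completing the proof. There is no real obstacle here; the only point requiring care is the finiteness and explicit constructibility of $L(a,b)$, which is already guaranteed by Lemma \ref{Lemma chop}.
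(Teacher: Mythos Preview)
Your proposal is correct and is exactly the argument the paper has in mind: the corollary is stated as ``an immediate corollary'' with no separate proof, and your write-up simply unpacks that immediacy by letting the $P_i$ be the propositions ``$\ell_i=0$'' for the finitely many $\ell_i\in L(a,b)$ and invoking the preceding theorem. There is nothing to add.
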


The complex numbers are such a ring, as is any Heyting field. Why do we want
Corollary \ref{Corollary Bezout}? The relevant intuitionistic tautology is
that the negation of 
\begin{equation*}
\left( P_{1}\vee \lnot P_{1}\right) \wedge \left( P_{2}\vee \lnot
P_{2}\right) \wedge \cdots \wedge \left( P_{k}\vee \lnot P_{k}\right) \wedge
A
\end{equation*}%
implies $\lnot A$. That is easy to see by induction from the case $k=1$. For 
$k=1$, suppose $A$ and $\lnot \left( \left( P_{1}\vee \lnot P_{1}\right)
\wedge A\right) $. If $P_{1}$ is true, then we get a contradiction, so $%
P_{1} $ is false, and we get a contradiction. So if we want to prove $\lnot
A $, we are allowed to assume any finite number of instances of the law of
excluded middle. The proposition $A$ that we are interested in refuting is
that a given monic polynomial with complex coefficients has no root. That
refutation is the weak fundamental theorem of algebra.

First we prove the fundamental theorem of algebra for strongly separable
polynomials.

\begin{theorem}
\label{Theorem separable}Let $f$ be a monic polynomial with complex
coefficients. Suppose that there exist polynomials $s$ and $t$ such that $%
sf+tf^{\prime }=1$. Then $f=\left( X-r_{1}\right) \cdots \left(
X-r_{n}\right) $ where the $r_{i}$ are distinct complex numbers.
\end{theorem}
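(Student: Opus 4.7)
The plan is to combine the spectral factorization of Theorem~\ref{Theorem qfspec} with a quantitative consequence of the Bezout identity $sf+tf'=1$. The heuristic is that if any factor $g_i$ in the spectral factorization had degree $\ge 2$, then $g_i$ and $g_i'$ would be simultaneously small at some point, which in turn would force $|f|+|f'|$ to be small---something the identity $sf+tf'=1$ rules out on any bounded set.

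First I would fix a closed disc $D=\{|z|\le R\}$ containing $\spec f$; classical coefficient bounds for roots yield such an $R$ in terms of the coefficients of $f$. Because $s$ and $t$ are polynomials, they are bounded on $D$, say $|s(z)|,|t(z)|\le M$, and the Bezout identity gives
\begin{equation*}
|f(z)|+|f'(z)|\;\ge\;1/M \quad\text{for all } z\in D.
\end{equation*}

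Next, apply Theorem~\ref{Theorem qfspec} with an $\varepsilon>0$ to be chosen below, obtaining a factorization $f=g_1g_2\cdots g_k$ in which the diameter of $\spec g_i$ is less than $\varepsilon$ for each $i$ and the spectra are pairwise a positive distance apart. The crux is to show that $\varepsilon$ can be taken so small that each $g_i$ must be linear. Assume $d=\deg g_i\ge 2$. Approximate $g_i$ in coefficient norm by $\tilde g_i=\prod_{j=1}^{d}(X-q_j)$ with Gaussian $q_j$ all within $\varepsilon$ of $c:=q_1\in D$, which is possible by the diameter hypothesis and the homeomorphism property of $\spec$. A direct expansion then gives $\tilde g_i(c)=0$ and $|\tilde g_i'(c)|\le \varepsilon^{d-1}$, so by refining the approximation $|g_i(c)|$ and $|g_i'(c)|$ can be made arbitrarily small. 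Writing $f=g_i\cdot h$ with $h=\prod_{j\ne i}g_j$, we have
\begin{equation*}
f(c)=g_i(c)\,h(c), \qquad f'(c)=g_i'(c)\,h(c)+g_i(c)\,h'(c),
\end{equation*}
and since $h,h'$ are bounded on $D$, $|f(c)|+|f'(c)|$ is made arbitrarily small, contradicting the lower bound $1/M$. Hence every $g_i$ is linear, $g_i=X-r_i$, and the $r_i$ are pairwise distinct because the one-point spectra $\{r_i\}$ and $\{r_j\}$ lie at a positive distance for $i\ne j$.

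The main obstacle is the quantitative bookkeeping in the preceding paragraph: translating the ``$g_i$ close to $\tilde g_i$ in $\spec$'' statement into ``$g_i(c)$ close to $\tilde g_i(c)$'', and uniformly bounding $h$ and $h'$ on $D$ independently of the factorization. Both require a priori control of the coefficients of the $g_i$ and $h$ by those of $f$ (e.g.\ via classical Mignotte-type bounds), so that $\varepsilon$ depends only on $M$, $d$, $R$, and the coefficients of $f$---and not on the factors we are in the midst of constructing.
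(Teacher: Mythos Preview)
Your approach is correct, and the bookkeeping you flag is genuinely resolvable: since $\spec h\subset\spec f$ lies in $D$ and $h$ is monic of degree at most $n-1$, elementary symmetric-function bounds give $|h|,|h'|\le C$ on $D$ with $C$ depending only on $R$ and $n$, not on the factorization; likewise the passage from closeness of $\tilde g_i$ to $g_i$ in the spectral metric to closeness of their values at $c$ is uniform on $D$. One small imprecision: for fixed $\varepsilon$ you cannot make $|g_i'(c)|$ \emph{arbitrarily} small, only $\le O(\varepsilon^{d-1})$---but that is all you need once $\varepsilon$ is chosen against $1/(MC)$.

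The paper, however, takes a more direct route that bypasses Theorem~\ref{Theorem qfspec}. It approximates $\spec f$ by Gaussian $q_1,\ldots,q_n$ so that $g=\prod(X-q_i)$ is close to $f$; then $f(q_i)$ is small, so by $sf+tf'=1$ the values $f'(q_i)$ are bounded away from $0$, and since $f'(q_1)\approx g'(q_1)=\prod_{j\ne 1}(q_1-q_j)$, the $q_i$ must be pairwise bounded apart. This gives $n$ separated discs, each capturing exactly one point of every sufficiently fine approximation, and completeness of $\mathbf{C}$ produces the $r_i$ directly. Your argument effectively hides the ``approximating points are well separated'' step inside Theorem~\ref{Theorem qfspec} and then rules out higher-degree clusters by contradiction; the paper's version constructs the roots outright with less machinery, while yours has the virtue of localizing the Bezout estimate to a single factor and making the dependence on the spectral factorization explicit.
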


\begin{proof}
From \cite{FTA} we can find algebraic numbers $q_{1},\ldots ,q_{n}$ that
approximate the spectrum of $f$ arbitrarily closely. That means that 
\begin{equation*}
g=\left( X-q_{1}\right) \cdots \left( X-q_{n}\right)
\end{equation*}
is arbitrarily close to $f$, and that given another close approximation $%
q_{1}^{\prime },\ldots ,q_{n}^{\prime }$ to the spectrum of $f$, we can
reindex it so that $q_{i}^{\prime }$ is close to $q_{i}$ for each $i$. In
particular, the numbers $f\left( q_{i}\right) $ can be made arbitrarily
small so the numbers $f^{\prime }\left( q_{i}\right) $ are bounded away from 
$0$ because of the equation $sf+tf^{\prime }=1$. Differentiation of
polynomials is continuous, so $f^{\prime }\left( q_{1}\right) $ is close to 
\begin{equation*}
g^{\prime }\left( q_{1}\right) =\left( q_{1}-q_{2}\right) \left(
q_{1}-q_{3}\right) \cdots \left( q_{1}-q_{n}\right) .
\end{equation*}%
Hence $q_{1}$ is bounded away from $q_{2},\ldots ,q_{n}$, and similarly for
the rest of the $q$'s. Thus we can find $n$ discs, $D_{1},\ldots ,D_{n}$,
bounded away from each other, so that any sufficiently close approximation
to the spectrum of $f$ has one point in each of the discs. The points in $%
D_{i}$, as we range over approximations to the spectrum, form a coherent
system of approximations, hence determine a unique complex number $r_{i}$
because the complex numbers are complete in this choiceless sense. Clearly $%
f $ vanishes on each $r_{i}$, so $f$ can be written as desired.\medskip
\end{proof}

Now we can prove the weak fundamental theorem of algebra for arbitrary monic
polynomials.

\begin{theorem}[Weak fundamental theorem of algebra]
\label{Theorem WFTA}Let $f$ be a nonconstant monic polynomial with complex
coefficients. Then the assumption that $f$ has no roots leads to a
contradiction.
\end{theorem}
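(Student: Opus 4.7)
The plan is to induct on the degree $n$ of $f$. The base case $n=1$ is immediate: $f = X - c$ has the root $c$, contradicting the assumption. For the inductive step with $\deg f = n \geq 2$, I would apply Corollary \ref{Corollary Bezout} to the pair $f, f'$, viewed as polynomials over the complex numbers (which satisfy the hypothesis that every noninvertible element is zero, being a Heyting field). This produces a finite list of propositions $P_1, \ldots, P_k$ such that, assuming $P_i \vee \neg P_i$ for each $i$, there exist $s, t$ with $d := sf + tf'$ dividing both $f$ and $f'$ and being either monic or $0$.

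Since the goal is to refute $A := $ ``$f$ has no root,'' the intuitionistic tautology emphasized in the paragraph before Theorem \ref{Theorem separable} lets us freely assume all the instances $P_i \vee \neg P_i$ while trying to derive a contradiction from $A$. We then obtain $d$ with the properties above. The case $d = 0$ is ruled out immediately, because $d \mid f$ would force $f = 0$, whereas $f$ is monic of positive degree. If $d$ is monic of degree $0$, then $d = 1$, so $sf + tf' = 1$, and Theorem \ref{Theorem separable} produces a factorization of $f$ into distinct linear factors, in particular exhibiting roots and contradicting $A$.

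The remaining subcase is $d$ monic with $1 \leq \deg d \leq n$. If $\deg d = n$, then $d = f$ (both are monic of the same degree), so $f \mid f'$, which forces $f' = 0$ since $\deg f' < n$; but $f' \neq 0$, as the leading coefficient of $f'$ is $n$, which is invertible in the complex numbers. Otherwise $1 \leq \deg d < n$ and $f = d h$ for some monic $h$. Any root of $d$ would be a root of $f$, so from $A$ we infer that $d$ has no root. Since $\deg d < n$, the inductive hypothesis applied to $d$ yields the desired contradiction.

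The main obstacle the proof has to navigate is the failure of decidable arithmetic in the complex numbers: we cannot execute the Euclidean algorithm on $f$ and $f'$ directly, because we have no way to test whether successive leading coefficients vanish. Corollary \ref{Corollary Bezout} packages this obstruction into a finite list of excluded-middle questions, and the intuitionistic tautology justifies freely assuming their answers, precisely because we are refuting $A$ rather than establishing a positive existence statement about roots.
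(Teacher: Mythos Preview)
Your proof is correct and follows essentially the same approach as the paper: induction on degree, Corollary~\ref{Corollary Bezout} applied to $f$ and $f'$, the intuitionistic tautology to justify assuming the finitely many LEM instances, then the case $d=1$ handled by Theorem~\ref{Theorem separable} and the nonconstant-proper-divisor case handled by induction. The only cosmetic difference is that the paper rules out $d=0$ and $\deg d=n$ in one stroke by observing that $d\mid f'$ with $f'$ nonzero of degree less than $n$, whereas you treat these as separate subcases.
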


\begin{proof}
We apply Corollary \ref{Corollary Bezout} to the polynomial $f$ and its
derivative $f^{\prime }$. Assume $f$ has no roots. We will derive a
contradiction from that and the existence of polynomials $s$ and $t$ in $%
\mathbf{C}\left[ X\right] $ such that $d=sf+tf^{\prime }$ is a monic
polynomial that divides both $f$ and $f^{\prime }$. As $f^{\prime }$ is a
nonzero polynomial of degree less than $f$, the polynomial $d$ is either $1$
or a monic nonconstant proper divisor of $f$. In the latter case, we can
replace $f$ by $d$ and we are done by induction on the degree as $d$ cannot
have roots (if $\deg f=1$, then $f$ has a root). So we may assume that%
\begin{equation*}
sf+tf^{\prime }=1\text{.}
\end{equation*}%
But then Theorem \ref{Theorem separable} says that $f$ has a root. \medskip
\end{proof}

The following corollary is the point of this whole exercise.

\begin{corollary}
\label{Corollary qlrootset}Let $f$ be a nonconstant monic polynomial with
complex coefficients, $Z$ the zero set of $f$, and $z$ a complex number. If $%
d\left( z,\spec f\right) =r$, and $\varepsilon >0$, then the
assumption that $f$ has no roots in the disc of radius $r+\varepsilon $
around $z$ leads to a contradiction. Thus $d\left( z,\spec f\right)
=\delta \left( z,Z\right) $ so $Z$ is quasilocated. In fact, $Z$ is
quasifinite.
\end{corollary}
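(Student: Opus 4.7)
The plan is to establish the three assertions in order: the nonemptiness statement first, with quasilocatedness and quasifiniteness following from it by short arguments.

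For the first assertion, I would invoke Theorem~\ref{Theorem qfspec} to write $f=g_{1}g_{2}\cdots g_{k}$ where each $\spec g_{i}$ has diameter less than $\varepsilon/4$ and the $\spec g_{i}$ are a positive distance from each other. Since any approximating multiset of $\spec f$ decomposes as the union of approximating multisets of the $\spec g_{i}$, we have $d(z,\spec f)=\min_{i}d(z,\spec g_{i})$, so some index $j$ satisfies $d(z,\spec g_{j})<r+\varepsilon/4$. Any root $w$ of $g_{j}$ then lies within $d(z,\spec g_{j})$ plus the diameter of $\spec g_{j}$, hence less than $r+\varepsilon/2$, of $z$, so $w\in B_{r+\varepsilon}(z)$. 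Since a root of $g_{j}$ is a root of $f$, the assumption that $f$ has no roots in $B_{r+\varepsilon}(z)$ forces $g_{j}$ to have no roots at all, contradicting Theorem~\ref{Theorem WFTA}.

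For the second assertion, if $w\in Z$ then $d(w,\spec f)=0$, and the triangle inequality for distance to a multiset (passed to the limit) gives $d(z,\spec f)\le d(z,w)$. So $r$ is a lower bound for $\{d(z,w):w\in Z\}$. If $s>r$, applying the first assertion with $\varepsilon=s-r$ shows that $B_{s}(z)\cap Z$ cannot be empty, hence by Lemma~\ref{Lemma anchor} $s$ is not a lower bound. Thus $\delta(z,Z)=\glb_{w\in Z}d(z,w)=r=d(z,\spec f)$, and $Z$ is quasilocated.

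For the third assertion, given $\varepsilon>0$, apply Theorem~\ref{Theorem qfspec} to factor $f=g_{1}\cdots g_{k}$ with each $\spec g_{i}$ of diameter less than $\varepsilon/3$ and the spectra mutually a positive distance apart. For each $i$ pick $x_{i}$ within $\varepsilon/3$ of $\spec g_{i}$. The first assertion applied to $g_{i}$ and $x_{i}$ guarantees that $B_{\varepsilon}(x_{i})\cap Z$ cannot be empty. For the covering condition, given $w\in Z$, use $\min_{i}d(w,\spec g_{i})=d(w,\spec f)=0$ together with cotransitivity of the reals to extract an index $j$ with $d(w,\spec g_{j})<\varepsilon/3$; then a triangle-inequality computation yields $|w-x_{j}|<\varepsilon$. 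Padding $x_{1},\ldots,x_{k}$ to length $n=\deg f$ by repetition gives an $\varepsilon$-quasiapproximation of fixed length $n$, so $Z$ is quasifinite.

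The step I expect to be the main obstacle is the covering condition in the third assertion: constructively selecting the index $j$ from $w\in Z$ without choice relies on the $\spec g_{i}$ being mutually bounded away, so that cotransitivity pinpoints a single cluster rather than forcing us to choose among several candidates.
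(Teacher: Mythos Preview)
Your argument is correct, and for the first two assertions it is essentially the paper's proof (you are more explicit than the paper about why $r$ is the greatest lower bound of $\{d(z,w):w\in Z\}$, but the idea is identical).

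For quasifiniteness the paper takes a different, more direct route. Rather than invoking Theorem~\ref{Theorem qfspec} a second time, it simply chooses Gaussian numbers $r_{1},\ldots,r_{n}$ so that $(X-r_{1})\cdots(X-r_{n})$ is close to $f$, and takes $r_{1},\ldots,r_{n}$ as the $\varepsilon$-quasiapproximation. The first condition then reduces to $\delta(r_{i},Z)=d(r_{i},\spec f)$ being small, which is immediate from the second assertion; the covering condition is just the observation that if $f(w)=0$ and $f$ is close to $\prod(X-r_{i})$ then $\prod(w-r_{i})$ is small, so $w$ is close to some $r_{i}$. This avoids the second factorization, the cotransitivity step, and the padding from $k$ to $n$. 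Your approach works too, but the paper's is shorter and uses only the approximation of $\spec f$ by Gaussian multisets rather than the finer structure provided by Theorem~\ref{Theorem qfspec}.

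One remark on the concern you flag at the end: the selection of the index $j$ in your covering argument does not actually depend on the clusters being mutually bounded away. The constructive fact you need is simply that for finitely many reals $a_{1},\ldots,a_{k}$, if $\min_{i}a_{i}<c$ then $a_{j}<c$ for some $j$; this follows from cotransitivity alone, regardless of how the $a_{i}$ compare to one another. The positive mutual distance in Theorem~\ref{Theorem qfspec} is not doing any work in your third assertion.
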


\begin{proof}
Write $f=g_{1}g_{2}\cdots g_{k}$ where the diameter of each $\spec 
g_{i}$ is less than $\varepsilon $. As $r=\inf_{i}d\left( z,\spec 
g_{i}\right) $, there is $i$ such that the diameter of $g_{i}$ plus $d\left(
z,\spec g_{i}\right) $ is less than $r+\varepsilon $. That requires
that $g_{i}$ has no roots (anywhere), which leads to a contradiction by
Theorem \ref{Theorem WFTA}.

To show that $Z$ is quasifinite, we have to find, for each $\varepsilon >0$,
an $\varepsilon $-quasi-approximation to $Z$ of length $n$. An $\varepsilon $%
-quasi-approximation to $Z$ is given by the root set $r_{1},\ldots ,r_{n}$
of a polynomial over the Gaussian numbers that is close to $f$. To show that
it is an $\varepsilon $-quasi-approximation, we need to show that $%
B_{\varepsilon }\left( r_{i}\right) \cap Z$ cannot be empty for $i=1,\ldots
,n$, and $Z\subset B_{\varepsilon }\left( r_{1}\right) \cup \cdots \cup
B_{\varepsilon }\left( r_{n}\right) $. The first condition means that $%
\delta \left( r_{i},Z\right) <\varepsilon $. But $\delta \left(
r_{i},Z\right) =d\left( r_{i},\spec f\right) $ which we can make
arbitrarily small. For the second condition, if $z\in Z$, so $f\left(
z\right) =0$, then $z$ is close to one of the $r_{i}$ because if the
polynomial $f$ is close to $\left( X-r_{1}\right) \cdots \left(
X-r_{n}\right) $, and $f\left( z\right) =0$, then $z$ is close to one of the 
$r_{i}$.\medskip 
\end{proof}

\section{Comaximal polynomials and resultants}

We would like to prove that, for monic polynomials $a$ and $b$ over $\mathbf{%
C}$, that the distance between $\spec a$ and $\spec b$ is
positive if and only if there are polynomials $s$ and $t$ such that $sa+tb=1$%
. To do this, we will show that if the distance from $\spec a$ to $%
\spec b$ is positive, then the resultant of $a$ and $b$ is different
from zero. Then we will apply a theorem valid for arbitrary commutative
rings (Theorem \ref{Theorem comaximal}) that will construct the polynomials $%
s$ and $t$.

The \textbf{resultant} of $a$ and $b$ (not necessarily monic) is the
determinant of their Sylvester matrix. Here is the \textbf{Sylvester matrix}
for $a=a_{3}X^{3}+a_{2}X^{2}+a_{1}X+a_{0}$ and $b=b_{5}X^{5}+\cdots
+b_{1}X+b_{0}$%
\begin{equation*}
S=\left( 
\begin{array}{cccccccc}
a_{3} & a_{2} & a_{1} & a_{0} & 0 & 0 & 0 & 0 \\ 
0 & a_{3} & a_{2} & a_{1} & a_{0} & 0 & 0 & 0 \\ 
0 & 0 & a_{3} & a_{2} & a_{1} & a_{0} & 0 & 0 \\ 
0 & 0 & 0 & a_{3} & a_{2} & a_{1} & a_{0} & 0 \\ 
0 & 0 & 0 & 0 & a_{3} & a_{2} & a_{1} & a_{0} \\ 
b_{5} & b_{4} & b_{3} & b_{2} & b_{1} & b_{0} & 0 & 0 \\ 
0 & b_{5} & b_{4} & b_{3} & b_{2} & b_{1} & b_{0} & 0 \\ 
0 & 0 & b_{5} & b_{4} & b_{3} & b_{2} & b_{1} & b_{0}%
\end{array}%
\right)
\end{equation*}%
If $a=\prod_{i=1}^{m}\left( X-q_{i}\right) $ and $b=\prod_{j=1}^{n}\left(
X-r_{j}\right) $, then the determinant of $S$ is $\prod_{i,j}\left(
q_{i}-r_{j}\right) $. That fact is a simply a polynomial identity in the
indeterminants $q_{i}$ and $r_{j}$.

We say that two elements $a$ and $b$ a ring are \textbf{comaximal} if there
exist $s$ and $t$ in the ring such that $sa+tb=1$.

\begin{theorem}
\label{Theorem comaximal}Let $a=a_{m}X^{m}+a_{m-1}X^{m-1}+\cdots+a_{0}$ and $%
b=b_{n}X^{n}+b_{n-1}X^{n-1}+\cdots+a_{0}$ be polynomials in $R\left[ X\right]
$ with $m,n\geq1$. Then the following two conditions are equivalent:

\begin{enumerate}
\item The polynomials $a$ and $b$ are comaximal in $R\left[ X\right] $ and
the (formal) leading coefficients $a_{m}$ and $b_{n}$ are comaximal in $R$,

\item The resultant of $a$ and $b$ is a unit in $R$.
\end{enumerate}
\end{theorem}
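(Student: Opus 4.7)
The plan is to prove both directions, with the substantive work in $(1) \Rightarrow (2)$. For $(2) \Rightarrow (1)$, I invoke two standard consequences of the form of the Sylvester matrix $S$. The cofactor identity $S \cdot \mathrm{adj}(S) = (\det S) I$ tells us that multiplication by $\det S$ factors through the Sylvester map; in particular, $\det S = s_0 a + t_0 b$ for some $s_0, t_0 \in R[X]$ with $\deg s_0 < n$ and $\deg t_0 < m$, so if $\det S$ is a unit, dividing through yields a Bezout relation $sa + tb = 1$ and the comaximality of $a$ and $b$ in $R[X]$. Separately, the first column of $S$ has only $a_m$ (in row $1$) and $b_n$ (in row $n+1$) as nonzero entries, so expansion along that column exhibits $\det S \in (a_m, b_n)$; a unit resultant then forces $(a_m, b_n) = R$.

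For $(1) \Rightarrow (2)$ I view $S$ as representing the $R$-linear map $\phi \colon U \to V$ between free $R$-modules of rank $m+n$, where $U$ parametrizes pairs $(s, t)$ with $\deg s < n$ and $\deg t < m$, $V$ is the module of polynomials of degree $< m+n$, and $\phi(s, t) = sa + tb$. Showing $\det S$ is a unit reduces to proving $\phi$ is surjective: once the basis $1, X, \ldots, X^{m+n-1}$ of $V$ is lifted through $\phi$, the resulting $R$-linear map $\psi \colon V \to U$ satisfies $\phi \psi = \mathrm{id}_V$, and multiplicativity of determinants gives $(\det \phi)(\det \psi) = 1$.

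I obtain surjectivity of $\phi$ by localizing separately at each leading coefficient. Over $R[a_m^{-1}]$, the polynomial $a$ has a unit leading coefficient, so ordinary polynomial division by $a$ is available. Given $v \in V$, use the comaximality witness $\sigma a + \tau b = 1$ and divide $v\tau$ by $a$ in $R[a_m^{-1}][X]$, obtaining $v\tau = qa + r$ with $\deg r < m$; substituting yields $v = (v\sigma + qb) a + rb$, and a degree comparison (using that $a$ has exact degree $m$ after localization) gives $\deg(v\sigma + qb) < n$. So $\phi$ is surjective over $R[a_m^{-1}]$, and symmetrically over $R[b_n^{-1}]$. The cokernel $C = V/\phi(U)$ is finitely generated over $R$, so vanishing after each localization means $a_m^k C = 0 = b_n^\ell C$ for some $k, \ell$. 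Comaximality of $a_m$ and $b_n$ transfers to their powers via the binomial expansion of $1 = (\alpha a_m + \beta b_n)^{k+\ell}$, so $\mathrm{Ann}(C) = R$ and $C = 0$.

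The main obstacle is the careful bookkeeping in the localization step: one must verify that the only obstruction to dividing by $a$ in $R[X]$ is the non-invertibility of $a_m$, so that the division becomes genuine after inverting $a_m$, and must track degrees to obtain the bound $\deg(v\sigma + qb) < n$ on the cofactor of $a$. The remaining ingredients---finite generation of $C$ as a quotient of $R^{m+n}$, the transfer from annihilation by a power of $a_m$ (and of $b_n$) to annihilation by $1$, and the lifting of a finite basis through a surjection---are all explicit and choice-free.
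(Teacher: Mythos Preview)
Your proof is correct and follows essentially the same route as the paper's: the $(2)\Rightarrow(1)$ direction via the adjugate identity and first-column expansion is identical, and for $(1)\Rightarrow(2)$ both arguments localize at $a_m$ and at $b_n$, use ordinary division by the now-unit-leading polynomial to get the degree-bounded B\'ezout representation, and then glue via comaximality of powers of $a_m$ and $b_n$. The only cosmetic difference is that the paper glues the property ``$\det S$ is a unit'' directly (showing $\det S$ divides a power of each leading coefficient), whereas you glue surjectivity of the Sylvester map via its cokernel and then read off invertibility of $\det S$; these are the same argument in slightly different packaging.
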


\begin{proof}
If $S$ is the Sylvester matrix of $a$ and $b$, and $S^{\ast }$ is the
adjugate of $S$, then $S^{\ast }S=\left( \det S\right) I$, where $I$ is the
identity matrix and $\det S$ is the resultant of $a$ and $b$. So the last
row of $S^{\ast }$ gives the coefficients of polynomials $s$ and $t$ such
that $sa+tb$ is the resultant of $a$ and $b$. Thus if condition 2 holds,
then the polynomials $a$ and $b$ are comaximal in $R\left[ X\right] $. It
remains to show that $a_{m}$ and $b_{n}$ are comaximal in $R$. But that
follows by expanding the determinant of $S$ by minors using the first column.

Now suppose condition 1 holds, so there exist polynomials $s$ and $t$ such
that $sa+tb=1$. We will first prove 2 under the assumption that $a_{m}$ (or $%
b_{n}$) is a unit. Now if $g$ is any polynomial of degree less than $m+n$,
then we can certainly find polynomials $s$ and $t$ such that $sa+tb=g$. As $%
a_{m}$ is unit, we can write $t=qa+r$ where $\deg r<m$. So%
\begin{equation*}
\left( s+qb\right) a+rb=g
\end{equation*}
which means we may assume that $\deg t<m$, so $\deg tb<m+n$. This forces $%
\deg s<n$ because if $s_{k}$ is the (formal) leading coefficient of $s$, and 
$k\geq n$, then the equation $sa+tb=g$ requires $s_{k}=0$ as $\deg tb<m+n$.
Because $\deg s<n$ and $\deg t<m$, we can put the coefficients of $s$ and $t$
together to form a row vector $v$ of length $m+n$ so that $vS=\left(
g_{m+n-1},g_{m+n-2},\ldots,g_{1},g_{0}\right) $ where $S$ is the Sylvester
matrix. Since we can do this for arbitrary $g$ of degree less than $m+n$,
this shows that the matrix $S$ is invertible, whence $\det S$, which is the
resultant of $a$ and $b$, is a unit in $R$.

To finish the proof, we pass to the rings $R\left[ 1/a_{m}\right] $ and $R%
\left[ 1/b_{n}\right] $. The kernel of the natural map $R\rightarrow R\left[
1/a_{m}\right] $ is the set of elements of $R$ that annihilate some power of 
$a_{m}$. In particular, $R\left[ 1/a_{m}\right] $ is trivial exactly when $%
a_{m}$ is nilpotent. Now since the theorem is true when either $a_{m}$ or $%
b_{n}$ is a unit, it follows that $\det S$ is a unit in $R\left[ 1/a_{m}%
\right] $ and in $R\left[ 1/b_{n}\right] $. We want to show that $\det S$ is
a unit in $R$. That follows from the fact that $a_{m}$ and $b_{n}$ are
comaximal. Specifically, we have $\left( \det S\right) r/a_{m}^{k}=1$ in $R%
\left[ 1/a_{m}\right] $ for some $r\in R$ and positive integer $k$, so $%
\left( \det S\right) ra_{m}^{i}=a_{m}^{k+i}$ in $R$ for some positive
integer $i$. Thus $\det S$ divides a power of $a_{m}$ in $R$. Similarly, $%
\det S$ divides a power of $b_{n}$. That makes $\det S$ a unit because any
power of $a_{m}$ is comaximal with any power of $b_{n}$. \medskip
\end{proof}

It's not enough in Theorem \ref{Theorem comaximal} just to require that $a$
and $b$ be comaximal. A trivial example is $a=b=0X+1$ where $m=n=1$.

Over a discrete field, Theorem \ref{Theorem comaximal} is often stated as
the resultant of $a$ and $b$ is zero if and only if $a$ and $b$ have a
nontrivial common factor, it being assumed that nobody is so silly as to
have the formal leading coefficient of $a$ and $b$ equal to zero. The
theorem in this form, even for monic $a$ and $b$, does not generalize well
to arbitrary commutative rings.

Consider the polynomials $\left( X-2\right) \left( X-6\right) =X^{2}+4$ and $%
X\left( X-4\right) $ over $\mathbf{Z}_{8}$. It is easily checked that their
resultant is zero. However, they have no nontrivial common factors. It's
easy to see that they have no nontrivial \emph{monic} common factors. What
constitutes a trivial common factor? It seems clear that these are exactly
the unit polynomials, the polynomials with inverses. Over a decent ring, the
units in $R\left[ X\right] $ are just the units of $R$, but in a ring with
nilpotent elements, like $\mathbf{Z}_{8}$, there are units of arbitrarily
high degree, like $4X^{n}+1$.

We will show that over $\mathbf{Z}_{8}$, or any primary ring, the factors of
a monic polynomial are all products of a unit polynomial and a monic
polynomial. Thus if a monic polynomial has a nontrivial factor, then it has
a nontrivial monic factor.

First we note that in any ring, the factorization of a polynomial as a monic
polynomial times a unit polynomial, if possible, is unique. Indeed, if $%
mu=m^{\prime }u^{\prime }$, then $m=m^{\prime }u^{\prime }u^{-1}$ so $\deg
m^{\prime }\leq \deg m$, hence, by symmetry $\deg m^{\prime }=\deg m$. As
both $m$ and $m^{\prime }$ monic, we must have $u^{\prime }u^{-1}=1$.

We can characterize the polynomials that can be factored in this way.

\begin{theorem}
\label{Theorem unitmonic}Let $p=\sum a_{i}X^{i}$ be a polynomial over a
commutative ring $R$. Suppose $a_{m}$ is a unit in $R$ and that $a_{i}$ is
nilpotent for $i>m$. Then $p$ can be written as a unit polynomial times a
monic polynomial.
\end{theorem}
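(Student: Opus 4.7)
The plan is to construct $u$ and $f$ by successive approximation, exploiting the nilpotence of the ideal $I := (a_{m+1},\ldots,a_n) \subseteq R$ generated by the coefficients above index $m$ (where $n = \deg p$). Since $I$ is finitely generated by nilpotent elements in a commutative ring, $I$ is itself nilpotent; fix $N$ with $I^N = 0$. As a zeroth approximation I set $u^{(0)} := a_m$ and $f^{(0)} := X^m + a_m^{-1}a_{m-1}X^{m-1} + \cdots + a_m^{-1}a_0$, so that $u^{(0)} f^{(0)} = a_0 + a_1 X + \cdots + a_m X^m$ and the error $e_0 := p - u^{(0)} f^{(0)} = \sum_{i>m} a_i X^i$ has all coefficients in $I$.

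For the inductive step, suppose that after stage $k$ I have partial sums $U_k = \sum_{j=0}^k u^{(j)}$ and $F_k = \sum_{j=0}^k f^{(j)}$ with error $e_k := p - U_k F_k$ whose coefficients lie in $I^{k+1}$. Since $f^{(0)}$ is monic, the division algorithm yields $e_k = Q f^{(0)} + R$ with $\deg R < m$, and because that algorithm uses only subtractions the coefficients of $Q$ and $R$ remain in $I^{k+1}$. I set $u^{(k+1)} := Q$ and $f^{(k+1)} := a_m^{-1} R$, which gives $u^{(0)} f^{(k+1)} + u^{(k+1)} f^{(0)} = e_k$ on the nose. Expanding $U_{k+1} F_{k+1}$ and using this cancellation, the new error $e_{k+1}$ is the sum of the cross terms $(U_k - u^{(0)}) f^{(k+1)}$, $u^{(k+1)} (F_k - f^{(0)})$, and $u^{(k+1)} f^{(k+1)}$, each of which has coefficients in $I \cdot I^{k+1} = I^{k+2}$.

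After $N$ stages the error has coefficients in $I^N = 0$, so $p = U_{N-1} F_{N-1}$. Set $u := U_{N-1}$ and $f := F_{N-1}$. Then $f$ is monic of degree $m$ because $f^{(0)}$ is monic of degree $m$ and every subsequent correction $f^{(k)}$ for $k \geq 1$ has degree less than $m$; and $u$ is a unit in $R[X]$ because its constant term is $a_m$ plus something in $I$ (hence still a unit in $R$) while its remaining coefficients lie in $I$ (hence are nilpotent), which is the standard criterion for a polynomial to be a unit.

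The main obstacle is the bookkeeping that verifies at each stage that the new error really does land in the next power of $I$; once that is in place, nilpotence of $I$ guarantees termination in finitely many steps. The preliminary observation that a finitely generated ideal of nilpotents is nilpotent (by pigeonhole on products of generators) is also needed but routine.
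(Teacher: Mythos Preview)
Your proof is correct, but it follows a genuinely different route from the paper's. The paper works multiplicatively: it repeatedly multiplies $p$ by explicit unit polynomials of the form $1 - (a_j/a_m)X^{j-m}$, each of which kills the current top coefficient $a_j$ while pushing the remaining high-degree coefficients into successively higher powers of the nilpotent ideal $I$; after finitely many such multiplications the product has exact degree $m$ with unit leading coefficient, and one reads off the factorization. Your argument works additively, in the style of Hensel lifting: you fix the monic zeroth approximation $f^{(0)}$ and correct both factors simultaneously via the division algorithm, controlling the error by powers of $I$. Both arguments rely on the same engine, namely the nilpotence of the finitely generated ideal $I$, to guarantee termination. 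The paper's version is slightly more elementary in that it never invokes polynomial division and produces the unit factor as an explicit finite product; your version is more systematic and makes the Hensel structure transparent, and it generalizes more readily to lifting factorizations modulo other nilpotent ideals.
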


\begin{proof}
Let $I$ be a fixed nilpotent ideal of $R$ such that $a_{i}\in I$ for all $%
i>m $. We will multiply $p$ by a sequence of unit polynomials until it
becomes monic.

Suppose $j>m$ and $d\geq1$ are such that%
\begin{align*}
a_{i} & \in I^{d}\text{ for }m<i\leq j \\
a_{i} & \in I^{d+1}\text{ for }i>j
\end{align*}
Multiply $p$ by the unit polynomial%
\begin{equation*}
1-\frac{a_{j}}{a_{m}}X^{j-m}
\end{equation*}
In the resulting polynomial $p^{\prime}$ we have%
\begin{align*}
a_{m}^{\prime} & \in a_{m}+I^{d}\text{ is a unit} \\
a_{j}^{\prime} & =0\in I^{d+1} \\
a_{i}^{\prime} & \in I^{d}+I^{d}=I^{d}\text{ for }m<i\leq j \\
a_{i}^{\prime} & \in I^{d+1}+I^{d+d}=I^{d+1}\text{ for }i>j
\end{align*}
When $j=m+1$, we get $a_{i}^{\prime}\in I^{d+1}$ for all $i>m$. At that
point we set $d$ equal $d+1$ and continue until $I^{d}=0$. At that point the
polynomial has a unit $a_{m}$ as its formal leading coefficient. \medskip
\end{proof}

The converse of Theorem \ref{Theorem unitmonic} is also true because the
units in the ring $R\left[ X\right] $ are exactly those polynomials whose
constant term is a unit in $R$ and whose other coefficients are nilpotent.

We end this section by applying Theorem \ref{Theorem comaximal} to
polynomials over the complex numbers and their spectra.

\begin{theorem}
If $a$ and $b$ are nonconstant monic polynomials over $\mathbf{C}$, then the
distance between $\spec a$ and $\spec b$ is positive if and
only if $a$ and $b$ are comaximal.
\end{theorem}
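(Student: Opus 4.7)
The plan is to prove the two implications separately, reducing the harder direction to Theorem~\ref{Theorem comaximal} via the resultant.

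For the implication \emph{comaximal $\Rightarrow$ positive distance}, suppose $sa+tb=1$ for some $s,t\in\mathbf{C}[X]$. First I would localize: since $a$ and $b$ are monic nonconstant, their spectra sit in a common disc $|z|\le R$ (a Cauchy bound on the roots from the coefficients of $a$ and $b$), and outside this disc $d(z,\spec a)\ge|z|-R$. So the infimum defining the distance between the two spectra may be restricted to the bounded region $|z|\le R+1$, on which $|s(z)|$ and $|t(z)|$ are uniformly bounded by some constant $C_1$. The key estimate I would then establish is
\[
|a(z)|\le C_2\,d(z,\spec a)\quad\text{and}\quad|b(z)|\le C_2\,d(z,\spec b)
\]
uniformly for $|z|\le R+1$. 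This is proved by approximating $a$ by $\prod_i(X-q_i)$ with Gaussian $q_i$: one factor $(z-q_i)$ achieves the minimum distance (close to $d(z,\spec a)$), and the remaining $m-1$ factors are bounded by $2R+1$; passing to the limit of approximations yields the bound. Combining these estimates,
\[
1=|s(z)a(z)+t(z)b(z)|\le C_1C_2\bigl(d(z,\spec a)+d(z,\spec b)\bigr),
\]
so the distance between $\spec a$ and $\spec b$ is at least $1/(C_1C_2)>0$.

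For the implication \emph{positive distance $\Rightarrow$ comaximal}, suppose the distance between $\spec a$ and $\spec b$ is at least some rational $\delta>0$. By Theorem~\ref{Theorem comaximal} it suffices to show that the resultant of $a$ and $b$ is a unit in $\mathbf{C}$; the leading coefficients, both $1$, are trivially comaximal. I would invoke the polynomial identity $\det S=\prod_{i,j}(q_i-r_j)$: if $q_1,\ldots,q_m$ and $r_1,\ldots,r_n$ are multisets of Gaussian numbers approximating $\spec a$ and $\spec b$ to within $\eta$, then the resultants of the approximating polynomials converge to $\mathrm{res}(a,b)$ by continuity. For each $i$, $d(q_i,\spec a)\le\eta$, so by hypothesis $d(q_i,\spec b)\ge\delta-\eta$; since $r_j$ lies within $\eta$ of $\spec b$, this forces $|q_i-r_j|\ge\delta-2\eta$ for every $j$. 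Hence the approximating resultants have modulus at least $(\delta-2\eta)^{mn}$. Taking $\eta\to 0$ yields $|\mathrm{res}(a,b)|\ge\delta^{mn}>0$, so $\mathrm{res}(a,b)$ is nonzero and therefore a unit in the field $\mathbf{C}$.

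The main obstacle is the uniform bound $|a(z)|\le C_2\,d(z,\spec a)$ in the first direction: constructively one cannot name the individual roots of $a$, so this estimate has to be extracted through approximations and a limiting argument rather than from a factorization of $a$. Once that inequality is established, both directions are short: the forward direction is a direct modulus estimate, and the reverse direction is a clean application of the product formula for the resultant together with Theorem~\ref{Theorem comaximal}.
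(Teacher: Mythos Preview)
Your argument is correct, and for the implication \emph{positive distance $\Rightarrow$ comaximal} it coincides with the paper's: approximate the spectra by Gaussian multisets, use the product formula for the resultant to bound $|\mathrm{res}(a,b)|$ below, and invoke Theorem~\ref{Theorem comaximal}.

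The difference is in the other direction. The paper does not argue \emph{comaximal $\Rightarrow$ positive distance} directly. Instead it uses Theorem~\ref{Theorem comaximal} \emph{both} ways: since the leading coefficients are $1$, comaximality of $a$ and $b$ is equivalent to the resultant being a unit, and the whole proof reduces to the single biconditional ``resultant nonzero $\iff$ distance positive'', which follows immediately from the fact that both the resultant and the set-distance are limits (of $\prod_{i,j}(q_i-r_j)$ and $\min_{i,j}|q_i-r_j|$ respectively) over the same approximating multisets, and one of these is bounded away from zero exactly when the other is. This is shorter and more symmetric: no separate analytic estimate is needed.

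Your route for the forward direction, via the inequality $|a(z)|\le C_2\,d(z,\spec a)$ and the Bezout identity $1=sa+tb$, is a genuine alternative. It avoids the resultant entirely on that side and is a self-contained estimate that would survive in settings where a product formula for the resultant is not available. The cost is the extra work of establishing the bound on $|a(z)|$ choicelessly through approximations, which you correctly identify as the main obstacle; your sketch of that step (pull out the nearest factor, bound the remaining $m-1$ factors by $(2R+1)^{m-1}$, pass to the limit) is sound.
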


\begin{proof}
We will apply Theorem \ref{Theorem comaximal}. As $a$ and $b$ are monic,
their leading coefficients are certainly comaximal. So $a$ and $b$ are
comaximal if and only if their resultant is a unit, that is, if and only if
the resultant is different from zero. It thus suffices to show that the
resultant is different from zero if and only if the distance between $%
\spec a$ and $\spec b$ is positive.

The resultant, as a function of the coefficients of $a$ and $b$, is clearly
uniformly continuous on bounded subsets. So if $\prod_{i=1}^{m}\left(
X-q_{i}\right) $ and $\prod_{j=1}^{n}\left( X-r_{j}\right) $ are close to $a$
and $b$ respectively, then $\prod_{i,j}\left( q_{i}-r_{j}\right) $ is close
to the resultant of $a$ and $b$. Thus the resultant of $a$ and $b$ is the
limit of $\prod_{i,j}\left( q_{i}-r_{j}\right) $. On the other hand, the
distance between $\spec a$ and $\spec b$ is the limit of $%
\min \left\vert q_{i}-r_{j}\right\vert $. Clearly $\prod_{i,j}\left(
q_{i}-r_{j}\right) $ is eventually bounded away from zero if and only if $%
\min \left\vert q_{i}-r_{j}\right\vert $ is. \medskip
\end{proof}

\section{Examples of Riesz spaces}

The question is raised in \cite{CS} as to whether you can construct the
points of the spectrum of a separable Riesz space, all of whose elements are
normable, in the absence of countable choice. The authors, referring to \cite%
{FTA}, suggested basing a counterexample on the zero set of $X^{2}-a$.
However it appeared that the resulting Riesz space was not separable. Here
we elaborate on that construction and give a separable counterexample.

A \textbf{Riesz space} is a lattice ordered vector space $V$ over the
rational numbers with an element $1$ such that for each $v$ in $V$, there
exists a positive integer $n$ such that $v\leq n\cdot 1$. The canonical
example is the space of uniformly continuous functions on a compact metric
space, where $1$ is the constant function whose value is $1$. We say that $V$
is \textbf{normable}, or \textbf{normed}, if for each $v\in V$, the set $%
\left\{ q\in \mathbf{Q}:v\leq q\cdot 1\right\} $ has an infimum in $\mathbf{R%
}$. We call this greatest lower bound the \textbf{least upper bound} of $v$.
The norm of $f$ is defined to be the least upper bound of $\left\vert
v\right\vert =v\vee 0-v\wedge 0$. Note that classically every Riesz space is
normed.

Let $S$ be a quasilocated subset of the complex plane that is contained in a
closed disc $D$. Let $C\left( D\right) $ be the Riesz space of uniformly
continuous real valued functions on $D$. Let $K$ be the functions in $%
C\left( D\right) $ that vanish on $S$. The set $K$ is a subspace of $C\left(
D\right) $ that is closed under the lattice operations. Our counterexample
is the space $C\left( D\right) /K$. It is separable because $C\left(
D\right) $ is. Note that this is not exactly the space of uniformly
continuous functions on $S$, it is the space of those uniformly continuous
functions on $S$ that extend to uniformly continuous functions on $D$.

If $f\in C\left( D\right) $, then $f$ vanishes on $S$ if and only if $f$ and 
$-f$ are nonnegative on $S$. Note that if $f$ is nonnegative on $S$, and $g$
vanishes on $S$, then $f+g$ is nonnegative on $S$, so this defines a notion
of nonnegativity on $C\left( D\right) /K$. You get another notion of
nonnegativity on $C\left( D\right) /K$ by considering those functions that
come from nonnegative functions in $C\left( D\right) $. However, if $f$ is
nonnegative on $S$, then $f$ is equal to $f\vee 0$ on $S$, so these two
notions are the same.

\begin{theorem}
\label{Theorem Riesz}Let $S$ be a quasilocated subset of the complex plane
that is contained in a closed disc $D$. Let $C\left( D\right) $ be the Riesz
space of uniformly continuous real valued functions on $D$ and let $K$ be
the functions in $C\left( D\right) $ that vanish on $S$. Then every element
of $C\left( D\right) /K$ is normable, and each Riesz-space homomorphism $%
C\left( D\right) /K\rightarrow R$ that takes $1$ to $1$ is given by
evaluation at a point $z_{0}\in D$ such that $\delta \left( z_{0},S\right)
=0 $. Conversely, if $\delta \left( z_{0},S\right) =0$, and $f\in K$, then $%
f\left( z_{0}\right) =0$, so evaluation at $z_{0}$ gives a Riesz-space
homomorphism $C\left( D\right) /K\rightarrow R$ that takes $1$ to $1$.
\end{theorem}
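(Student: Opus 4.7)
The plan has two halves: verifying normability, and characterising the unital Riesz homomorphisms.

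For normability, fix $f \in C(D)$ and set $\psi_\varepsilon(z) := \max(0, 1 - \delta(z, S)/\varepsilon)$; this is uniformly continuous on $D$ because $\delta(\cdot, S)$ is, equals $1$ where $\delta(z, S) = 0$, and vanishes where $\delta(z, S) \geq \varepsilon$. Then $|f|(1 - \psi_\varepsilon)$ vanishes on $S$, so $[|f|] = [|f|\psi_\varepsilon]$. The compactness of $D$ produces $m_\varepsilon := \sup_D(|f|\psi_\varepsilon)$, an upper bound of $[|f|]$ because $|f| = |f|\psi_\varepsilon \leq m_\varepsilon$ on $S$. Conversely, given a rational $q$ with $|f| \leq q$ on $S$ and $\eta > 0$, take $\delta'$ from uniform continuity of $|f|$ with modulus $\eta$. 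For $\varepsilon \leq \delta'$ I claim $m_\varepsilon \leq q + \eta$: a putative $z_1 \in D$ with $|f(z_1)|\psi_\varepsilon(z_1) > q + \eta$ would force $\psi_\varepsilon(z_1) > 0$, hence $\delta(z_1, S) < \delta'$; Lemma \ref{Lemma anchor} then forbids $B_{\delta'}(z_1) \cap S$ from being empty, yet any $y$ in the intersection would satisfy $|f(y)| \geq |f(z_1)| - \eta > q$, contradicting $|f| \leq q$ on $S$. So the infimum $N$ of the monotone family $\{m_\varepsilon\}$ exists and is the least upper bound of $[|f|]$.

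For the converse direction of the homomorphism statement, $\delta(z_0, S) = 0$ lets Lemma \ref{Lemma zero set} conclude $f(z_0) = 0$ for every $f \in K$, so evaluation at $z_0$ descends through the quotient to a unital Riesz homomorphism. For the forward direction, given $\phi$, set $\tilde\phi := \phi \circ \pi : C(D) \to \mathbb{R}$, and build $z_0$ as a Cauchy limit. Put $h(w) := \tilde\phi(d(\cdot, w))$, a nonnegative $1$-Lipschitz function of $w$. For each $n$, pick a rational $1/n$-net $x_1^n, \ldots, x_{k_n}^n$ of $D$; the inequality $\min_i d(\cdot, x_i^n) \leq 1/n$ on $D$ pushes through $\tilde\phi$ to $\min_i h(x_i^n) \leq 1/n$, and comparing each $h(x_i^n)$ against $1/n$ and $2/n$ produces some $w_n$ in the net with $h(w_n) \leq 2/n$. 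The triangle inequality $d(\cdot, w_n) + d(\cdot, w_m) \geq d(w_n, w_m) \cdot 1$ yields $d(w_n, w_m) \leq h(w_n) + h(w_m)$ after applying $\tilde\phi$, so $\{w_n\}$ is Cauchy with limit $z_0 \in D$ satisfying $h(z_0) = 0$. For $f \in C(D)$ and $\eta > 0$, uniform continuity furnishes $\delta > 0$ with $|f - f(z_0) \cdot 1| \leq \eta \cdot 1 + (2\|f\|_\infty/\delta)\, d(\cdot, z_0)$ on $D$; applying $\tilde\phi$ and using $h(z_0) = 0$ gives $|\tilde\phi(f) - f(z_0)| \leq \eta$, so $\tilde\phi(f) = f(z_0)$. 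Finally, $\delta(z_0, S) = 0$: for any $\eta > 0$ the Lipschitz cutoff $g_\eta(z) := \max(0, 1 - d(z, z_0)/\eta)$ is in $C(D)$ with $g_\eta(z_0) = 1$, so were $B_\eta(z_0) \cap S$ empty, $g_\eta$ would lie in $K$, forcing $\tilde\phi(g_\eta) = 0$ and hence $g_\eta(z_0) = 0$, absurd.

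The main obstacle is representing $\tilde\phi$ as evaluation at a point without invoking the Gelfand--Kakutani machinery (which uses maximal ideals and is classical). The distance-function construction above circumvents this, using only the Riesz structure, uniform continuity, and completeness of $D$ to extract $z_0$ as a Cauchy limit controlled by the values $\tilde\phi$ takes on $d(\cdot, w)$.
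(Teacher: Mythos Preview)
Your normability argument via the cutoff functions $\psi_\varepsilon$ is correct and is a genuine alternative to the paper's approach. The paper instead takes a finite $\theta/4$-approximation $F$ to $D$, partitions $F$ into the points with $\delta(x,S)<\theta$ and those with $\delta(x,S)>\theta/2$, and reads off an approximation $\mu$ to the least upper bound directly as the maximum of $f$ over the first part. Your method packages the same information continuously rather than discretely; both work, and yours has the pleasant feature that $[f]=[f\psi_\varepsilon]$ makes the quotient structure visible. One small point: you compute the least upper bound of $|f|$, but normability asks for the least upper bound of $f$; this is harmless since one may first shift $f$ by a constant to make it nonnegative.

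Your identification of the homomorphisms, however, runs into the choiceless setting of the paper. You build $z_0$ as the limit of a Cauchy sequence $(w_n)$, where at each stage $n$ you \emph{choose} a point $w_n$ from a finite rational net satisfying $h(w_n)<2/n$. Constructively, the disjunction $h(x_i^n)<2/n \vee h(x_i^n)>1/n$ does not come with a canonical witness, so assembling the $w_n$ into a sequence is an appeal to countable choice---precisely what the paper is avoiding. The paper sidesteps this entirely: it sets $z_0=\rho(\pi_1)+i\rho(\pi_2)$ directly from the coordinate projections, observes that $\rho$ agrees with evaluation at $z_0$ on the vector sublattice generated by $1,\pi_1,\pi_2$, and then invokes a (choice-free) Stone--Weierstrass argument to extend to all of $C(D)$. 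Once $\rho$ is known to be evaluation at $z_0$, the conclusion $\delta(z_0,S)=0$ is immediate since $\delta(\cdot,S)\in K$, which is also simpler than your bump-function argument. Your distance-function idea $h(w)=\tilde\phi(d(\cdot,w))$ is elegant and would work in any framework with countable choice, but here the paper's explicit construction of $z_0$ is what makes the argument go through.
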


\begin{proof}
We want to approximate the least upper bound of $f$ on $S$ to within $%
\varepsilon $. Choose $\theta >0$ so that if $d\left( x,y\right) \leq \theta 
$, then $\left\vert f\left( x\right) -f\left( y\right) \right\vert
<\varepsilon /2$. Take a finite $\theta /4$-approximation $F$ to $D$ and
partition $F$ into two subsets, one, $S^{\prime }$, where $\delta \left(
x,S\right) <\theta $ and one where $\delta \left( x,S\right) >\theta /2$.
Let $\mu $ be the supremum of $f$ on $S^{\prime }$. We want to show that $%
f\leq \mu +\varepsilon $ on $S$ and that it is not the case that $f\leq \mu
-\varepsilon $ on $S$.

To show that $f\leq \mu +\varepsilon $ on $S$, let $s\in S$. There is $%
s^{\prime }\in S^{\prime }$ such that $d\left( s,s^{\prime }\right) <\theta $
for otherwise $d\left( s,s^{\prime }\right) >\theta /2$ for every $s^{\prime
}\in S^{\prime }$. But $d\left( s,t\right) <\theta /2$ for some $t\in F$,
and this $t$ must be in $S^{\prime }$. Thus $\left\vert f\left( s\right)
-f\left( s^{\prime }\right) \right\vert <\varepsilon /2$. As $f\left(
s^{\prime }\right) \leq \mu $, we conclude that $f\left( s\right) \leq \mu
+\varepsilon /2$.

Now suppose $f\leq \mu -\varepsilon $ on $S$. We know that there is $%
s^{\prime }\in S^{\prime }$ such that $f\left( s^{\prime }\right) $ is
arbitrarily close to $\mu $. Note that $B_{\theta }\left( s^{\prime }\right)
\cap S$ cannot be empty because $\delta \left( s^{\prime },S\right) <\theta $%
. Yet if $s\in B_{\theta }\left( s^{\prime }\right) \cap S$, then $%
\left\vert f\left( s\right) -f\left( s^{\prime }\right) \right\vert
<\varepsilon /2$, so $f\left( s^{\prime }\right) \leq \mu -\varepsilon /2$,
a contradiction.

For the second claim, let $\rho $ be a Riesz-space homomorphism $C\left(
D\right) /K\rightarrow \mathbf{R}$ taking $1$ to $1$. This induces a
homomorphism $\rho :C\left( D\right) \rightarrow \mathbf{R}$ taking $1$ to $1
$ and $K$ to zero. To see that $\rho $ is given by evaluation at a point in $%
D$, let $W$ be the vector sublattice of $C\left( D\right) $ generated by $1$
and the two coordinate projections, $\pi _{1}$ and $\pi _{2}$. Then $W$ is
dense in $C\left( D\right) $ by the Stone-Weierstrass approximation theorem
(see below). Let $z_{0}=\rho \left( \pi _{1}\right) +i\rho \left( \pi
_{2}\right) $. It's easy to see that $\rho $ is evaluation at $z_{0}$ on $W$%
, hence is evaluation at $z_{0}$ on $C\left( D\right) $.

We want to show that $\delta \left( z_{0},S\right) =0$. The function $%
f\left( x\right) =\delta \left( x,S\right) $ is in $C\left( D\right) $ and
vanishes on $S$. So $\rho \left( f\right) =0$. But $\rho $ is evaluation at $%
z_{0}$, so $\delta \left( z_{0},S\right) =0$. Conversely, suppose $\delta
\left( z_{0},S\right) =0$ and $f\in K$. Then Lemma \ref{Lemma zero set}
tells us that $f\left( z_{0}\right) =0$. \medskip
\end{proof}

\begin{corollary}
Let $S$ be the root set of a nonconstant monic polynomial $p$ and let $%
V=C\left( D\right) /K$ be the Riesz space associated with $S$ as in the
theorem. Then $V$ is normed and separable, and each Riesz-space homomorphism
from $V$ onto $\mathbf{R}$ gives a root of $p$.
\end{corollary}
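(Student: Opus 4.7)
The plan is to apply Theorem \ref{Theorem Riesz} directly with $S$ equal to the zero set of $p$. First I would verify the hypotheses of that theorem. Corollary \ref{Corollary qlrootset} already tells us $S$ is quasilocated (indeed quasifinite). For the containing disc, the standard root bound does the job: writing $p = X^n + a_{n-1}X^{n-1} + \cdots + a_0$, any $z$ with $p(z) = 0$ satisfies $|z| \le 1 + \max_i |a_i|$, and an upper bound for $\max_i |a_i|$ is computable from the complex coefficients. Let $D$ be the closed disc of that radius about the origin. Then Theorem \ref{Theorem Riesz} gives immediately that every element of $V = C(D)/K$ is normable, which is the ``normed'' half of the conclusion.

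For separability, I would invoke the Stone--Weierstrass approximation theorem already used in the proof of Theorem \ref{Theorem Riesz}: the vector sublattice of $C(D)$ generated over $\mathbf{Q}$ by $1$ and the two coordinate projections $\pi_1, \pi_2$ is a countable dense subset of $C(D)$, and its image under the canonical surjection $C(D) \to V$ is a countable dense subset of $V$.

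For the main claim, let $\rho\colon V \to \mathbf{R}$ be a Riesz-space homomorphism onto $\mathbf{R}$. I would first normalize $\rho$ to take $1$ to $1$. Pick $v \in V$ with $\rho(v) = 1$. By the Archimedean axiom applied to $v$ and $-v$, there is a positive integer $n$ with $-n \cdot 1 \le v \le n \cdot 1$. Since $\rho$ preserves the order, $-n\rho(1) \le \rho(v) = 1 \le n\rho(1)$, so $\rho(1) \ge 1/n > 0$. Then $\rho/\rho(1)$ is a Riesz homomorphism sending $1$ to $1$, so Theorem \ref{Theorem Riesz} produces a point $z_0 \in D$ with $\delta(z_0, S) = 0$, and $\rho$ factors through evaluation at $z_0$. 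Since $p$ is continuous on $\mathbf{C}$ and vanishes on $S$, Lemma \ref{Lemma zero set} gives $p(z_0) = 0$, so $z_0$ is a root of $p$ associated with $\rho$.

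I do not anticipate a serious obstacle: the argument is essentially an assembly of Corollary \ref{Corollary qlrootset}, Theorem \ref{Theorem Riesz}, and Lemma \ref{Lemma zero set}. The one place that needs a bit of care is the normalization step, where surjectivity of $\rho$ has to be leveraged constructively (via a specific preimage $v$ of $1$) to conclude $\rho(1) > 0$, rather than merely $\rho(1) \ge 0$.
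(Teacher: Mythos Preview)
Your argument is correct and follows the same route as the paper: invoke Corollary~\ref{Corollary qlrootset} for quasilocatedness, then Theorem~\ref{Theorem Riesz} for normability and the evaluation point $z_0$, and conclude $p(z_0)=0$ via Lemma~\ref{Lemma zero set}. The paper's own proof is much terser---it simply observes that $V$ is separable because $C(D)$ is, and that since $p$ vanishes on $S$, any $z_0$ with $\delta(z_0,S)=0$ is a root; it does not spell out the disc $D$ or the passage from ``onto $\mathbf{R}$'' to ``takes $1$ to $1$''. Your normalization step (using a preimage of $1$ and the Archimedean bound to force $\rho(1)>0$) is a genuine detail the paper leaves implicit, and handling it constructively as you do is appropriate.
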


\begin{proof}
The space $V$ is separable because the space $C\left( D\right) $ is
separable. As $p$ is in $K$, if $\delta \left( z_{0},S\right) =0$, then $%
z_{0}$ is a root of $p$. \medskip
\end{proof}

The point of the corollary is that, in the absence of choice, we need not be
able to construct a root of the polynomial $p\left( X\right) =X^{2}-a$, see 
\cite{FTA}, so we need not be able to construct a Riesz-space homomorphism
from a normed, separable, Riesz space onto $\mathbf{R}$.

We include a short choice-free proof of the Stone-Weierstrass theorem for
the square, which was appealed to in the proof of Theorem \ref{Theorem Riesz}%
. We want to approximate nonegative functions on the square by finite
suprema of finite infima of functions of the form $ax+by+c$, that is,
functions whose graphs are planes. Suppose we have a little square $S_{i}$
in a grid and a nonnegative constant $c_{i}$. Let $\varphi _{i}$ be the
infimum of the constant $c_{i}$ and one affine function for each side $L$ of 
$S_{i}$ whose value is $c_{i}$ on $L$ and $0$ on the opposite side of the
other little square in the grid with side $L$. The graph of $\varphi _{i}$
is a pyramid with its top cut off. Let $\psi =\sup_{i}\varphi _{i}$. Let $J$
be the indexes of the block of nine little squares centered at $S_{i}$. If $%
x\in S_{i}$, then 
\begin{equation*}
\inf_{j\in J}c_{j}\leq \psi \left( x\right) \leq \sup_{j\in J}c_{j}
\end{equation*}%
Given a uniformly continuous function $f$ on the square, choose a grid so
that $f$ does not vary by more than $\varepsilon $ on any block of nine
little squares in the grid. Choose $c_{i}$ to be the value of $f$ in the
center of the little square $S_{i}$ and let $\psi $ be as above. Then $\psi $
approximates $f$ within $\varepsilon $.


\bibliographystyle{amsplain}

\begin{thebibliography}{9}
\bibitem{B} \textsc{Bishop, Errett}, Foundations of constructive analysis,
McGraw-Hill 1967

\bibitem{CS} \textsc{Coquand, Thierry and Bas Spitters}, Formal topology and
constructive mathematics: the Gelfand and Stone-Yoshida representation
theorems, \textit{Journal of Universal Computer Science}, \textbf{11} (2005)
1932--1944

\bibitem{FTA} \textsc{Richman, Fred}, The fundamental theorem of algebra: a
constructive development without choice, \textit{Pacific Journal of
Mathematics}, \textbf{196} (2000), 213-230. MR 2001k:03141

\bibitem{GR} \textsc{\rule{60pt}{0.5pt}}, Generalized real numbers in
constructive mathematics, \textit{Indagationes Mathematicae}, \textbf{9}
(1998), 595-606. MR 2000e:03172

\bibitem{Wim} \textsc{Ruitenburg, Wim B. G}., Constructing roots of
polynomials over the complex numbers. \textit{Computational aspects of Lie
group representations and related topics} (Amsterdam, 1990), 107--128, CWI
Tract, \textbf{84}, Math. Centrum, Centrum Wisk. Inform., Amsterdam, 1991.
\end{thebibliography}

\end{document}